\documentclass[11pt,a4paper]{article}
\usepackage[T1]{fontenc}
\usepackage{lmodern}
\usepackage[margin=28mm,headheight=14pt]{geometry}
\usepackage{amsmath,amssymb,amsthm,mathtools}
\usepackage{lmodern}
\usepackage{microtype,booktabs,longtable,array,enumitem}
\usepackage{fancyhdr}
\usepackage{xcolor}
\usepackage{tikz}
\usetikzlibrary{arrows.meta,calc}
\usepackage[hidelinks]{hyperref}
\usepackage[mathlines]{lineno}
\usepackage[nameinlink,noabbrev]{cleveref}
\numberwithin{equation}{section}
\newtheorem{theorem}{Theorem}[section]
\newtheorem{lemma}[theorem]{Lemma}
\newtheorem{proposition}[theorem]{Proposition}

\theoremstyle{definition}

\theoremstyle{remark}

\newcommand{\ii}{\mathrm i}
\newcommand{\dz}{\partial_z}
\newcommand{\db}{\partial_{\bar z}}
\newcommand{\R}{\mathbb R}
\newcommand{\C}{\mathbb C}
\newcommand{\D}{\mathbb D}
\newcommand{\J}{\mathsf J}
\newcommand{\Tr}{\operatorname{Tr}}
\newcommand{\Dom}{\operatorname{Dom}}
\newcommand{\Rea}{\operatorname{Re}}
\newcommand{\Ima}{\operatorname{Im}}

\definecolor{ink}{HTML}{243441}

\begin{document}
\title{On the Dirac Faber-Krahn Conjecture}
\author{Habib Ammari\footnote{
  ETH Z\"urich, Department of Mathematics, Rämistrasse 101, 8092 Z\"urich, Switzerland, (habib.ammari@math.ethz.ch).} \and 
  Jiayu Qiu\footnote{
  ETH Z\"urich, Department of Mathematics, Rämistrasse 101, 8092 Z\"urich, Switzerland, (jiayu.qiu@math.ethz.ch).}}
\date{}
\maketitle
\thispagestyle{empty}
\begin{abstract}
In this paper, we prove the strict Faber-Krahn inequality for non-negative-mass Dirac operators on simply connected domains with infinite-mass boundary condition. Denoting the first positive eigenvalue of the corresponding Dirac operator with mass $m\geq 0$ as $\lambda_1^{+}(\Omega;m)$, our main result shows that
\begin{equation*}
\lambda_1^{+}(\Omega;m)\geq \lambda_1^{+}(B_{|\Omega|};m),
\end{equation*}
where $B_{|\Omega|}$ is the disk with the same area as the domain $\Omega$, and the equality is strictly valid if and only if $\Omega=B_{|\Omega|}$. The key observation leading to the proof is that the upper component of the Dirac spinor associated with $\lambda_1^{+}(\Omega;m)$ is non-vanishing over $\Omega$, which is proved by exploiting its Dirac current and the associated stream function. Based on this global non-vanishing property, we construct a sphere-valued spinor map of degree one, which allows a sharp comparison with the radial disk profile using the planar isoperimetry.
\end{abstract}

\vspace{1em}
\noindent\textbf{Keywords.} Dirac operator; infinite-mass boundary condition; Faber--Krahn inequality.\\

\noindent
\textbf{MSC 2020:} 35P15, 49R05, 47A75.

\begingroup
\small
\setlength{\parskip}{0pt}
\tableofcontents
\endgroup

\section{Introduction and main result} \label{sec:intro}

Let $\Omega\subset\R^2$ be a bounded simply connected domain with
$C^\infty$ boundary, and denote its area (the Lebesgue measure) by $A=|\Omega|$. The origin-centered disk with area $A$ is denoted by $B_A:=\{x\in\R^2:|x|<\sqrt{A/\pi}\}$. For the fixed non-negative mass $m\geq0$, the concerned Dirac operator is
\begin{equation} \label{intro:operator}
D_{\Omega,m}=-\ii(\sigma_1\partial_x+\sigma_2\partial_y)+m\sigma_3
\end{equation}
with the infinite-mass boundary condition (IMB)
\begin{equation*}
v=\ii\nu u\quad\text{on }\partial\Omega .
\end{equation*}
Here, $u$ and $v$ denote the (complex) components of the Dirac spinor, $\sigma_i$ denote the Pauli matrices
\begin{equation}\label{pre:pauli}
 \sigma_1=\begin{pmatrix}0&1\\1&0\end{pmatrix},\quad
 \sigma_2=\begin{pmatrix}0&-\ii\\\ii&0\end{pmatrix},\quad
 \sigma_3=\begin{pmatrix}1&0\\0&-1\end{pmatrix},
\end{equation}
and $\nu=\nu_1+\ii\nu_2$ is the outward unit normal identified as a complex number. The self-adjoint realization and general characterization of the spectrum of $D_{\Omega,m}$ are presented in Section \ref{sec:general_spectrum}. In particular, arranging the eigenvalues of $D_{\Omega,m}$ in increasing order and denoting the first positive one as $\lambda_1^+(\Omega;m)$, our main result is the following.
\begin{theorem}[Strict Faber-Krahn inequality]\label{main:fk}
For every bounded $C^\infty$ simply connected domain
$\Omega\subset\R^2$ and $m\geq0$, the first positive eigenvalue $\lambda_1^+(\Omega;m)$ is simple, with the associated eigenspinor $(u,v)^\top$ satisfying $u(z)\ne0$ for all $z\in\overline\Omega$. Moreover, it holds that
\begin{equation}\label{main:ineq}
 \lambda_1^+(\Omega;m)\geq\lambda_1^+(B_A;m),
 \quad A=|\Omega|.
\end{equation}
The equality holds if and only if $\Omega=B_A$ up to translation.
\end{theorem}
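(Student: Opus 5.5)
We follow the route indicated in the abstract: first prove that the upper component does not vanish, then build a degree-one map to the sphere, and finally compare with the disk through a variational characterization and planar isoperimetry.

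\textbf{Step 1: the eigenvalue problem in complex notation.} Write $D_{\Omega,m}$ with $\dz,\db$. The equation $D_{\Omega,m}(u,v)^\top=\lambda(u,v)^\top$ becomes
\[
-2\ii\,\dz v=(\lambda-m)u,\qquad -2\ii\,\db u=(\lambda+m)v,
\]
with $v=\ii\nu u$ on $\partial\Omega$. Eliminating $v$ gives $-4\dz\db u=(\lambda^2-m^2)u$. The boundary condition then becomes an oblique (Robin-type) condition on $u$ alone, schematically $\partial_\nu u+\kappa$-type terms $\,+(\lambda+m)u=0$.

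\textbf{Step 2: the variational characterization.} From Section \ref{sec:general_spectrum} I will use the variational principle for $\lambda=\lambda_1^+$ in the form of the quadratic identity
\[
\int_\Omega\frac{4|\db u|^2}{\lambda+m}\,dx+\int_{\partial\Omega}|u|^2\,ds=(\lambda-m)\int_\Omega|u|^2\,dx .
\]
This identity follows by integrating $\bar u\,\dz v$ by parts and using $v=\ii\nu u$. From it, $\lambda_1^+$ is the unique positive root of $\lambda-m=\inf_u \mathcal Q_\lambda(u)$. Here $\mathcal Q_\lambda$ is the Rayleigh quotient of the left-hand side against $\int_\Omega|u|^2$, and it is monotone in $\lambda$. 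Simplicity will follow from the non-vanishing in Step 3: two independent eigenfunctions could be combined to vanish at a prescribed point.

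\textbf{Step 3: non-vanishing of $u$ (main obstacle).} Consider the Dirac current $j=(\bar\psi\sigma_1\psi,\bar\psi\sigma_2\psi)$. It is divergence-free, so on the simply connected domain $\Omega$ it has a stream function $\phi$ with $j=\nabla^\perp\phi$. The IMB condition makes $j\cdot\nu=0$ on $\partial\Omega$, so $\phi$ is constant on $\partial\Omega$; normalize $\phi|_{\partial\Omega}=0$. A computation using the equations of Step 1 should give
\[
-\Delta\phi=c_1(\lambda)\,|u|^2-c_2(\lambda)\,|v|^2+(\text{gradient terms}),
\]
or equivalently an elliptic inequality for $\phi$ whose sign, for the first positive eigenvalue, yields $\phi>0$ in $\Omega$ via the maximum principle. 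At a zero $z_0$ of $u$, either $\nabla\phi(z_0)=0$ or the local winding of $u$ forces $\phi$ to change sign; the positive index of a zero of $u$, coming from the $\dz$-structure, would produce a saddle of $\phi$, which contradicts the minimality of the eigenfunction. Making this rigorous is the step I expect to be hardest. The first thing I would try is the following. If $u$ had zeros, replace $u$ by $|u|$-modified trial functions, or by the rotation $e^{\ii\theta}u$, inside the quadratic form of Step 2 and show strict decrease. This contradicts minimality and hence gives $u\neq0$ on $\overline\Omega$. On $\partial\Omega$, $u\neq0$ holds because $u=v=0$ at a boundary point would contradict unique continuation.

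\textbf{Step 4: comparison with the disk.} Since $u\neq0$, the map $\Psi=(u,v)/|(u,v)|$ is defined on $\overline\Omega$. Project it to $S^2$ via the Hopf map to obtain $n:\overline\Omega\to S^2$. On $\partial\Omega$ we have $v/u=\ii\nu$, so $n$ lies on the equator and traces it once, since $\nu$ has degree one on a simply connected $C^\infty$ boundary. Because $u\neq0$, $n$ stays in one hemisphere, so $\deg n=1$. The disk eigenfunction is radial, $u=f(r)$ and $v=\ii g(r)e^{\ii\theta}$, with $g/f$ increasing from $0$ to $1$. This suggests a rearrangement: level sets of $|v|/|u|$ replace radial shells. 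Apply coarea on these level sets together with the isoperimetric inequality $|\partial\{\ldots\}|^2\ge4\pi|\{\ldots\}|$. Pushing $|u|^2$ forward through $n$ and constructing a radial trial function on $B_A$, we aim to get $\mathcal Q_\lambda^{B_A}(\text{trial})\le\mathcal Q_\lambda^\Omega(u)$; the degree-one property ensures that the level sets are exhausted exactly once. Monotonicity in $\lambda$ then gives $\lambda_1^+(B_A;m)\le\lambda_1^+(\Omega;m)$. In the equality case every isoperimetric step is sharp, so almost every level set is a disk, and these disks are concentric by analyticity. Hence $\Omega=B_A$ up to translation.
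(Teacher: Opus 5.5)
Your Steps 1--2 are essentially fine. The identity is the balance identity $\int_{\partial\Omega}U=b\int U-a\int V$ rewritten with $v=-2\ii\,\db u/a$, and the root characterization is the known variational principle. The genuine gap is Step 3, which carries the whole theorem: it contains no working argument, and several of its claims are false.
\begin{itemize}
\item \emph{No sign for $\phi$.} Up to a constant factor, $-\Delta\phi$ is exactly $bU-aV$, with no gradient terms. This has no definite sign, so no maximum principle gives $\phi>0$ a priori. In the paper, $p>0$ is deduced \emph{from} $u\neq0$: at an interior critical point, $|\nabla p|^2=UV$ forces $v=0$, whence $\Delta p=-bU<0$. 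Using it to obtain $u\neq0$ is therefore circular.
\item \emph{Passing to $|u|$ does not lower the form.} Writing $u=\rho e^{\ii\theta}$ gives $4|\db u|^2=|\nabla\rho+\rho\,\J\nabla\theta|^2$. This vanishes for holomorphic $u$, while $4|\db\rho|^2=|\nabla\rho|^2$ does not. There is no diamagnetic inequality for $\db$, and a constant phase changes nothing.
\item \emph{Unique continuation does not exclude boundary zeros.} A common zero of $u,v$ at a single boundary point is compatible with unique continuation; excluding such zeros is exactly the delicate point.
\item \emph{Saddles are not contradictions.} A saddle of $\phi$ at an interior zero contradicts nothing by itself.
\end{itemize}

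What is missing is the following.
\begin{itemize}
\item[(a)] The ground-state identity $t_{\lambda,m}[u\xi]=4\int W|\db\xi|^2+4a\int p|\dz\xi|^2$ with $W=U-ap$. It shows that an eigenspinor with $u\ne0$ must be the first positive one and simple.
\item[(b)] The asymptotics $p\circ\Psi=\frac{|\alpha|^2}{2n+1}\Ima(z^{2n+1})+O(|z|^{2n+2})$ at a common boundary zero. It shows $p$ changes sign there.
\item[(c)] An open--closed continuation from the disk along $F_t(z)=[\Phi(tz)-\Phi(0)]/t$. Openness uses the simplicity from (a). Closedness uses $p_j>0\Rightarrow p_*\ge0$, which excludes boundary zeros via (b), and $U_j-a_jp_j\ge\min_{\partial\Omega}U_j$ passed to the limit, which excludes interior zeros.
\end{itemize}
Without a global mechanism of this kind, nothing in your Step 3 rules out zeros of $u$.

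Step 4 is likewise only a plan (``we aim to get''), and the natural rearrangement fails for the same reason. Symmetrizing $|u|$ discards the phase, and the $\db$-energy can be much smaller than $\frac14\int|\nabla|u||^2$. So no P\'olya--Szeg\H{o} argument yields a radial trial function with $\mathcal Q^{B_A}_\lambda(\text{trial})\le\mathcal Q^\Omega_\lambda(u)$.

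Your Hopf map of $(u,v)$, with stereographic coordinate $v/u$, is also not the right map. Its derivatives involve $\dz u$ and $\db v$, which the Dirac equation does not determine. Moreover, $u\ne0$ only says it omits the south pole, not that it stays in a hemisphere.

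The paper's key choice is $\eta=\sqrt a\,v/(\sqrt b\,\bar u)$. The conjugate on $u$ gives exactly $\eta_z=\ii k\varrho/(2\bar f^2)$, with $f=\sqrt b\,u$ and $\varrho=bU+aV$. Hence $|\nabla N|^2+2J_N=4k^2$, with boundary latitude $N_3=-m/\lambda$ and degree one. On level curves of $N_3$, this identity plus the winding $\int_{\Gamma_t}\varphi_s\,ds=2\pi$ gives the flux bound $X(t)+2\pi(1-t^2)\le2k\sqrt{1-t^2}\,\mathcal P(t)$. Coarea, isoperimetry, and comparison with the Bessel ODE $r_B'=r_B/(2r_B-\sin\theta)$ then give $k^2A\ge\pi x_c^2$ directly, with no trial function. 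In the equality case the level sets are nested disks, which suffices; no concentricity or analyticity argument is needed.
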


The Dirac Faber-Krahn problem has been actively studied and it indeed motivates several new approaches to spectral geometry. In \cite{BFSVGap}, the authors obtain the area bound
\[
 \lambda_1^+(\Omega;0)^2\geq\frac{2\pi}{|\Omega|}.
\]
This estimate gives a universal scale for the (massless) spectral gap, but the constant $(2\pi/|\Omega|)^{1/2}$ is smaller than the first positive eigenvalue of the disk case. Later, \cite{LO} establishes a sharp upper bound. In \cite{ABLO}, the authors formulate a variational characterization of the first positive eigenvalue on simply connected smooth domains and propose the Dirac Faber-Krahn conjecture on a sharp lower bound; they have also provided numerical evidence supporting the conjecture. 

The difficulty of the proof persists even within elementary classes of domains. In \cite{BK}, the authors prove partial optimization results for rectangles, and \cite{VuTriangles} obtains comparisons for right triangles under additional geometric hypotheses. Subsequently, the authors of \cite{ABK} study the optimization of the first three positive eigenvalues numerically, for different masses and geometric constraints. Their results support the disk conjecture for the first positive eigenvalue and also indicate that higher eigenvalues can exhibit different optimizing geometries. More recently, the connection between the Dirac Faber-Krahn conjecture and the $\bar\partial$-Robin Laplacian has led to further progress. Following \cite{DuranRobin}, the authors of \cite{DMS} apply this tool and prove Faber-Krahn inequalities for boundary parameters sufficiently close to the zigzag endpoints. Later, \cite{DuranBounds} generalizes the result to thin convex domains; see the more recent result \cite{DMSPW}. When the mass term is sufficiently large, the non-relativistic limit studied in \cite{BHS} yields disk comparison for a fixed domain after the corresponding rescaling. 

These results address different geometric restrictions, parameter regimes, or directions of comparison. The main result of this work, Theorem \ref{main:fk}, addresses the left-open question on the strict Faber-Krahn-type inequality. We work with the fixed infinite-mass boundary condition and do not restrict the mass term in the Dirac operator. We point out that, to the best of our knowledge, the simplicity of the first positive eigenvalue and the non-vanishing of the upper eigenspinor component claimed in Theorem \ref{main:fk} are also new; in fact, despite of their independent interests, these two key observations serve as the main ingredient for the proof of the Faber-Krahn inequality. These new structural conclusions provide a replacement for part of the role played by a positive ground state in scalar elliptic problems. In particular, they allow the first positive spectral branch to be identified and continued under smooth conformal deformations, which establishes a bridge to compare the first positive eigenvalue of a general domain to that of the disk.

The principal analytic tool is the stream function of the conserved Dirac current. The construction of such potentials from spinor bilinears has classical precedents in the spinor representation of surfaces\footnote{In fact, after rescaling the eigenspinor, our stream function is proportional to a height coordinate in the generalized Weierstrass representation.}; see \cite{Friedrich,Taimanov}. The contribution here is its use together with the infinite-mass boundary condition: its boundary normalization leads to an exact weighted scalar identity, positivity estimates, and a criterion that identifies the first positive eigenspinor. A local description of boundary zeros then makes these properties stable under continuation from a disk. To our knowledge, this combination has not previously been used in the Dirac Faber-Krahn problem.

It is worthwhile to mention the recent breakthrough in the overdetermined spectral problems. In \cite{ColbrookStepaniants}, the authors  construct a simply connected noncircular domain with analytic boundary that is a counterexample to the planar Schiffer conjectures. Their domain supports a non-constant Helmholtz solution with constant boundary value and vanishing normal derivative. In \cite{CaoDeDios}, the authors independently constructed another counterexample by a bifurcation method. These results show why overdetermined Helmholtz data alone do not provide a general route to prove the disk rigidity. These achievements indeed inspire this work: we realize the non-vanishing property of the eigenspinor should provide an essential additional ingredient to the conjecture.

The argument suggests several further questions. First of all, for other local quantum-dot boundary conditions, it will be interesting whether our method of stream potential can yield a sharp geometric comparison. On the other hand, for domains with holes that are not covered in this work, we point out that the stream function may take different constants on different components; understanding these constants is a natural step towards inequalities for domains with nontrivial topology. These extensions, as well as the passage to nonsmooth domains, require additional arguments and are left for future study.

\subsection{Outline}

The proof of Theorem~\ref{main:fk} consists of two main parts. First, in Sections \ref{stm:section}-\ref{con:section}, we prove that the upper component of the first positive eigenspinor is non-vanishing on $\overline\Omega$. This property also implies simplicity of the eigenvalue. These results are summarized in Theorem \ref{con:nonvanishing}. The stream function of the Dirac current is mainly constructed to prove this part. Second, we use this non-vanishing component to construct a sphere-valued map and obtain a sharp geometric comparison with the disk, which is accomplished in Section \ref{sec:application}. The major geometric result used in Section \ref{sec:application} is essentially a classical level-set comparison method, with details presented in Section \ref{cap:sharp}.
\section{Notation and preliminaries}\label{sec:prelim}

\subsection{Coordinates, orientations, and function spaces}
We identify $\R^2$ with $\C$ by $z=x+\ii y$. The unit complex disk is denoted as $\D=\{z\in\C:|z|<1\}$. For any complex number (or vector) $a$, $Kc:=\overline{c}$ denotes its (entry-wise) complex conjugation. For vectors in $\R^2$, the $\frac{\pi}{2}$-rotation is denoted as  $\J(x,y):=(-y,x)$, or multiplication by $\ii$ in complex notation. The Wirtinger derivatives on complex functions, as well as the Laplacian, are denoted as
\begin{equation}\label{pre:wirtinger}
 \dz:=\tfrac12(\partial_x-\ii\partial_y),\quad
 \db:=\tfrac12(\partial_x+\ii\partial_y),\quad
 \Delta=\partial_x^2+\partial_y^2:=4\dz\db.
\end{equation}
In particular, for a real function $p$, $2\db p=p_x+\ii p_y$ identifies its gradient with a complex-valued function. Throughout the paper, a superscript $\top$ denotes the transpose, where the superscript $*$ denotes the conjugate transpose.

On the positively oriented boundary $\partial \Omega$, the unit tangent with counterclockwise orientation is denoted as $\tau:=\J\nu=\ii\nu$. The normal derivative is written as $\partial_\nu f:=\nu_1f_x+\nu_2f_y$, and $\partial_\tau f:=\tau_1f_x+\tau_2f_y$ is the tangential derivative.

Throughout this paper, $L^p(\Omega;\C^q)$ refers to the $p$-integrable complex vector-valued function space with the usual norm induced by the inner product
\[
 \langle f,g\rangle_{L^2}:=\int_\Omega\sum_{j=1}^q
                         f_j\overline{g_j}.
\]
For $k,p\geq 0$, the Sobolev space $W^{k,p}(\Omega)$ of scalar functions is defined in the standard way, the derivative being understood in the distributional sense; when $p=2$, we write $H^k(\Omega):=W^{k,2}(\Omega)$. Since $\Omega$ is assumed to have a smooth boundary, the trace map $\Tr:H^{s}(\Omega)\to H^{s-1/2}(\partial\Omega)$ is well-defined, where the later denotes the standard fractional boundary Sobolev space defined by the Fourier transform.

\subsection{Dirac operator and general properties of its spectrum} \label{sec:general_spectrum}

Here, we sketch some general properties of the Dirac operator \eqref{intro:operator}, or equivalently the following compressed form
\begin{equation} \label{pre:matrix}
D_{\Omega,m}=
 \begin{pmatrix}m&-2\ii\dz\\-2\ii\db&-m\end{pmatrix}.
\end{equation}
First, as proved in \cite{BFSVSA}, $D_{\Omega,m}$ is self-adjoint in $L^2(\Omega;\C^2)$ with domain
\begin{equation} \label{pre:domain}
\Dom(D_{\Omega,m})=
 \{(u,v)^\top\in H^1(\Omega;\C^2):\Tr v=\ii\nu\Tr u\}.
\end{equation}
In terms of the spinor $\psi=(u,v)^{\top}$, the boundary condition is equivalent to $(\sigma\cdot\tau)\psi=\psi$, where $\sigma\cdot\tau=\sigma_1\tau_1+\sigma_2\tau_2$. It is also proved in \cite{BFSVSA} that $D_{\Omega,m}$ has a compact resolvent (recall that $\Omega$ is bounded). Consequently, the spectrum $\text{Spec}(D_{\Omega,m})$ is purely discrete. Moreover, since $D_{\Omega,m}$ respects the particle-hole symmetry
\begin{equation} \label{eq:particle-hold}
\mathcal{C}D_{\Omega,m}\mathcal{C}^{-1}=-D_{\Omega,m} \quad \text{with}\quad \mathcal{C}:=\sigma_{1}K \quad \text{satisfying} \quad  \mathcal{C}^2=I,
\end{equation}
we know $\text{Spec}(D_{\Omega,m})$ is symmetric about the origin. Hence, ordering the eigenvalues increasingly, it holds that
\begin{equation*}
\text{Spec}(D_{\Omega,m})=\big\{\cdots\leq \lambda_2^{-}(\Omega;m)\leq \lambda_1^{-}(\Omega;m) \leq 0 \leq \lambda_1^{+}(\Omega;m)
\leq \lambda_2^{+}(\Omega;m)\leq \cdots \big\},
\end{equation*}
where
\begin{equation*}
\lambda_n^{+}(\Omega;m)=-\lambda_n^{-}(\Omega;m) .
\end{equation*}
As proved in \cite[Theorem 3]{Vu}, each eigenspinor of $D_{\Omega,m}$ lies in $C^{\infty}(\overline{\Omega})$. Finally, in the following Lemma, we show that $\text{Spec}(D_{\Omega,m})$ has a gap near zero. Suppose $\lambda\in \text{Spec}(D_{\Omega,m})$ with an eigenspinor $\psi=(u,v)^{\top}$. In the sequel, we will write
\begin{equation}\label{pre:ab}
 a:=\lambda+m,\quad b:=\lambda-m,
 \quad U:=|u|^2,\quad V:=|v|^2.
\end{equation}
Then the scalar components satisfy
\begin{equation}\label{pre:system}
 \db u=\frac{\ii a}{2}v,\quad
 \dz v=\frac{\ii b}{2}u,
 \quad v=\ii\nu u\text{ on }\partial\Omega.
\end{equation}

\begin{lemma}\label{pre:gap}
The densities $U$ and $V$ in \eqref{pre:ab} satisfy that
\begin{equation}\label{pre:balance}
 \int_{\partial\Omega}Uds
       =b\int_\Omega UdA-a\int_\Omega VdA.
\end{equation}
Consequently, $0\notin \text{Spec}(D_{\Omega,m})$, and every positive eigenvalue satisfies $\lambda>m$. 
\end{lemma}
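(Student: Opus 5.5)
The plan is to obtain \eqref{pre:balance} from the complex divergence theorem applied to the bilinear $u\bar v$, and then to read off both consequences by comparing signs in the resulting identity. Throughout I use that eigenspinors lie in $C^\infty(\overline\Omega)$ by \cite[Theorem 3]{Vu}, so all integrations by parts are classical.

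\textbf{The identity.} Since $\db=\tfrac12(\partial_x+\ii\partial_y)$, for any $F\in C^1(\overline\Omega)$ we have
\[
\int_\Omega\db F\,dA=\tfrac12\int_{\partial\Omega}\nu F\,ds ,
\]
where $\nu=\nu_1+\ii\nu_2$. Take $F=u\bar v$. By \eqref{pre:system}, $\db u=\tfrac{\ii a}{2}v$ and $\db\bar v=\overline{\dz v}=-\tfrac{\ii b}{2}\bar u$. Hence
\[
\db(u\bar v)=\tfrac{\ii}{2}\,(aV-bU).
\]
On $\partial\Omega$ the boundary condition $v=\ii\nu u$ gives $\bar v=-\ii\bar\nu\bar u$. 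Since $|\nu|=1$, this yields $\nu u\bar v=-\ii U$. Substituting into the divergence formula gives
\[
-\tfrac{\ii}{2}\int_{\partial\Omega}U\,ds=\tfrac{\ii}{2}\Big(a\int_\Omega V\,dA-b\int_\Omega U\,dA\Big),
\]
which is exactly \eqref{pre:balance}.

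\textbf{Consequences.} Suppose $0<\lambda\le m$, or $\lambda=0$.
\begin{itemize}
\item If $0<\lambda\le m$, then $b\le0<a$. The right-hand side of \eqref{pre:balance} is then $\le -a\int_\Omega V\,dA\le 0$, while the left-hand side is $\ge 0$. Both sides must vanish, so $V\equiv0$ and $U=0$ on $\partial\Omega$.
\item If $\lambda=0$, then $a=m$ and $b=-m$. The right-hand side is $-m\int_\Omega(U+V)\,dA\le0$, so again $U|_{\partial\Omega}=0$, and hence $v|_{\partial\Omega}=0$ by the boundary condition.
\end{itemize}

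\textbf{Conclusion.} In every case $u$ vanishes on $\partial\Omega$.
\begin{itemize}
\item In the first case, $v\equiv0$ makes $u$ holomorphic ($\db u=0$) with zero boundary values, so $u\equiv0$ by the maximum principle.
\item In the case $\lambda=0$, we have $\db u=\tfrac{\ii m}{2}v$ and $\dz v=-\tfrac{\ii m}{2}u$.
  \begin{itemize}
  \item If $m>0$, the identity already forces $\int_\Omega(U+V)\,dA=0$.
  \item If $m=0$, then $u$ is holomorphic and $v$ is antiholomorphic, both with zero trace, so both vanish by the maximum principle.
  \end{itemize}
\end{itemize}
Either way $\psi\equiv0$, contradicting that $\psi$ is an eigenspinor. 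So $0\notin\mathrm{Spec}(D_{\Omega,m})$, and every positive eigenvalue satisfies $\lambda>m$. By the particle-hole symmetry \eqref{eq:particle-hold}, negative eigenvalues satisfy $\lambda<-m$; this is not needed but is consistent.

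\textbf{Main obstacle.} The only delicate point is choosing the right bilinear ($u\bar v$ rather than $\bar u v$). It must be the one whose $\db$-derivative involves only the known derivatives $\db u$ and $\dz v$, and whose boundary term collapses to $U$ via the infinite-mass condition. The rest is sign bookkeeping.
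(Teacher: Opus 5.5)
Your proof is correct and is essentially the paper's argument. The paper applies the $\bar\nu$-version of Green's formula to $\bar u v$, using $2\dz(\bar u v)=\ii(bU-aV)$ and the boundary value $\bar\nu\bar u v=\ii U$; this is exactly the complex conjugate of your computation with $u\bar v$ and $\db$. Its case analysis ($0<\lambda<m$, $\lambda=m>0$, $0=\lambda<m$, $\lambda=m=0$) matches yours up to regrouping. So the ``main obstacle'' you name is not really one: $\bar u v$ works equally well when paired with $\dz$ instead of $\db$.
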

\begin{proof} 
First, recall that for a complex-valued smooth function $F$, the divergence theorem applied to its real and imaginary parts gives
\begin{equation}\label{pre:green}
 \int_\Omega 2\dz F\,dA=\int_{\partial\Omega}\bar\nu F\,ds,
 \quad
 \int_\Omega 2\db F\,dA=\int_{\partial\Omega}\nu F\,ds.
\end{equation}
On the other hand, by \eqref{pre:system}, it is direct to calculate that
\begin{equation} \label{eq:dirac_current_derivative}
2\dz(\bar uv)=
 2\bigl(\overline{\db u}\,v+\bar u\dz v\bigr)
                  =\ii(bU-aV).
\end{equation}
Note that the boundary value of $\bar\nu\bar uv$ is $\ii U$. Hence, equation~\eqref{pre:green}, together with the above identity, proves \eqref{pre:balance}. 

The claim of the spectral gap is justified case by case:
\begin{enumerate}
    \item[i)] if $0<\lambda<m$, then we have $b<0<a$ and the right side of \eqref{pre:balance} is strictly negative for a nontrivial eigenspinor, which contradicts the positivity of the left side;
    \item[ii)] if $\lambda=m>0$, the right side of \eqref{pre:balance} equals $-2m\int V$. Hence $v=0$ and the boundary trace of $u$ is zero. However, then the analyticity $\db u=0$ and the zero trace of $u$ force $u\equiv 0$, which is again a contradiction;
    \item[iii)] if $0=\lambda<m$, the right side of \eqref{pre:balance} equals $-m\int(U+V)<0$, which is impossible as in i);
    \item[iv)] if $\lambda=m=0$, \eqref{pre:balance} indicates that $u$ vanishes identically on the boundary, and so does $v$ by IMB. Since $u$ and $v$ are holomorphic and antiholomorphic respectively, as seen from \eqref{pre:system}, they are all harmonic; hence their zero traces imply global vanishing, which is again a contradiction.
\end{enumerate}
\end{proof}
In the remainder of this paper, for every positive eigenvalue $\lambda$, we therefore know that
\begin{equation}\label{pre:k}
 k:=\sqrt{ab}=\sqrt{\lambda^2-m^2}>0,
\end{equation}
and the components of an eigenspinor satisfy that Helmholtz equation
\begin{equation} \label{eq:helmholtz_component}
-\Delta u=k^2u,\quad -\Delta v=k^2v .
\end{equation}
In particular, both components $u$ and $v$ are real analytic in the interior of $\Omega$ by the regularity theory of elliptic equations.

Note that when the concerned domain is actually a disk, i.e., $\Omega=B_{|\Omega|}$ with radius $R$, the eigenspinor associated with $\lambda=\lambda_{1}^{+}$ is explicitly calculated in the polar coordinates as
\begin{equation*}
u(r,\theta)=J_0(kr),\quad v(r,\theta)=i\,\frac{k}{\lambda+m}e^{i\theta}J_1(kr)
\end{equation*}
when \(k<j_{0,1}/R\) satisfies the compatibility condition 
\begin{equation*}
J_0(kR)=\frac{k}{\sqrt{m^2+k^2}+m}J_1(kR).
\end{equation*}
See Section \ref{sec:disk_first_eigenspinor} for more details. Here $J_0$ and $J_1$ are the Bessel functions of the first kind of orders zero and one, respectively,
\begin{equation}\label{pre:besselseries}
 J_0(x)=\sum_{j=0}^\infty\frac{(-1)^j(x/2)^{2j}}{(j!)^2},
 \quad
 J_1(x)=\sum_{j=0}^\infty\frac{(-1)^j(x/2)^{2j+1}}{j!(j+1)!},
\end{equation}
and $j_{0,1}$ is the first positive (simple) zero of $J_0$. We record the following elementary property regarding the Bessel functions that will be applied later. The proof is omitted.
\begin{lemma}\label{pre:bessel}
On $0<x<j_{0,1}$ the function $h(x)=J_1(x)/J_0(x)$ is positive
and increases strictly from zero to infinity. It satisfies
\begin{equation}\label{pre:riccati}
 h'(x)=1+h(x)^2-\frac{h(x)}x,
 \quad h(x)=\frac x2+O(x^3)\quad(x\to 0^{+}).
\end{equation}
\end{lemma}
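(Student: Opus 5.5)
The statement to prove is Lemma \ref{pre:bessel}: on $(0,j_{0,1})$ the ratio $h=J_1/J_0$ is positive, increases strictly from $0$ to $\infty$, satisfies the Riccati equation \eqref{pre:riccati}, and has the expansion $h(x)=x/2+O(x^3)$ at $0^+$. The plan is to derive everything from the standard Bessel recurrences,
\begin{equation*}
J_0'=-J_1,\qquad J_1'=J_0-\frac{J_1}{x},
\end{equation*}
both of which follow by termwise differentiation of the series \eqref{pre:besselseries}.

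\textbf{The Riccati equation.} On $(0,j_{0,1})$ we have $J_0>0$, since $J_0(0)=1$ and $j_{0,1}$ is the first zero of $J_0$. Hence $h$ is smooth there, and the quotient rule gives
\begin{equation*}
h'=\frac{J_1'J_0-J_1J_0'}{J_0^2}=\frac{J_0^2-J_1J_0/x+J_1^2}{J_0^2}=1-\frac{h}{x}+h^2,
\end{equation*}
which is \eqref{pre:riccati}.

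\textbf{The expansion and positivity.} From the series, $J_1(x)=x/2-x^3/16+\dots$ and $J_0(x)=1-x^2/4+\dots$. Dividing gives $h(x)=x/2+O(x^3)$, so $h\to0$ as $x\to0^+$. For positivity, note that $(xJ_1)'=xJ_0>0$ on $(0,j_{0,1})$ and $xJ_1\to0$ at $0$. Thus $J_1>0$ on the whole interval, and so $h>0$.

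\textbf{Strict monotonicity.} This is the only step needing an idea. Rewrite
\begin{equation*}
h'=1+h^2-\frac{h}{x}=\Bigl(h-\frac{1}{2x}\Bigr)^2+1-\frac{1}{4x^2},
\end{equation*}
which is visibly positive only for $x>1/2$, so it does not settle the question near $0$. A cleaner route is to write the numerator as
\begin{equation*}
N(x):=J_0^2+J_1^2-\frac{J_0J_1}{x}.
\end{equation*}
A direct computation with the recurrences gives
\begin{equation*}
N'=-2J_0J_1+2J_1\Bigl(J_0-\frac{J_1}{x}\Bigr)-\frac{1}{x}\Bigl(-J_1^2+J_0^2-\frac{J_0J_1}{x}\Bigr)+\frac{J_0J_1}{x^2}
=-\frac{J_1^2}{x}-\frac{J_0^2}{x}+\frac{2J_0J_1}{x^2}.
\end{equation*}
The last expression equals $-\tfrac{2}{x}N+\tfrac{J_1^2+J_0^2}{x}\cdot 1$, so $(x^2N)'=x(J_0^2+J_1^2)>0$. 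Moreover $x^2N\to0$ as $x\to0^+$, because $N$ is bounded near $0$. Hence $x^2N>0$, so $N>0$ and $h'>0$ on $(0,j_{0,1})$.

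\textbf{Limit at the right endpoint.} As $x\to j_{0,1}^-$ we have $J_0\to0^+$. Also $J_1(j_{0,1})>0$: this follows from the positivity argument above extended to the closed interval, or simply from the simplicity of the zero together with $J_0'=-J_1$. Therefore $h\to+\infty$.

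The expected main obstacle is the monotonicity near $0$. It is resolved by the identity $(x^2N)'=x(J_0^2+J_1^2)$, which should be double-checked during the write-up.
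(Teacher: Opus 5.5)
Your proof is correct, and every computation checks out, including the key identity $(x^2N)'=x(J_0^2+J_1^2)$. Integrating it gives the Lommel-type formula $x^2\bigl(J_0^2+J_1^2\bigr)-xJ_0J_1=\int_0^x t\bigl(J_0^2+J_1^2\bigr)\,dt>0$, which settles $h'>0$ uniformly on the interval, including near $0$ where the completed-square form fails. There is nothing to compare against, because the paper states this lemma and explicitly omits its proof.

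A small bonus of your route is that $N>0$ holds for all $x>0$. So the same argument shows $h$ is strictly increasing between any two consecutive zeros of $J_0$, not only on $(0,j_{0,1})$.
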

\section{The stream function}\label{stm:section}

Suppose $\lambda\in \text{Spec}(D_{\Omega,m})$ with an eigenspinor $\psi=(u,v)^{\top}$. Recall the quantities $a,b,U,V$ associated with this eigenpair defined in \eqref{pre:ab}. In this section, we study the Dirac current \(j=2\bigl(\operatorname{Re}(\bar uv),\operatorname{Im}(\bar uv)\bigr)\) and its associated stream function $p$, constructed in Proposition \ref{stm:stream}, that satisfies
\begin{equation*}
\nabla p=\tfrac12 \J j .
\end{equation*}
In particular, the stream function characterizes the energy cost of spatially modulating an eigenspinor’s upper component in the reduced scalar problem; see Lemma \ref{stm:transform}. The main result of this section is Lemma \ref{stm:zerofree}, which shows that 1) if the upper component of an eigenspinor is non-vanishing, then the stream function is non-negative throughout the domain, and 2) using 1) and Lemma \ref{stm:transform}, we show that the eigenspinor with non-vanishing upper component, if exists, belongs to the first positive eigenvalue, which is necessarily simple.

\subsection{Construction of the real stream function}

\begin{proposition}[Stream function] \label{stm:stream}
There is a unique real smooth function $p$ on $\overline\Omega$
that satisfies
\begin{equation}\label{stm:streamgradient}
 p|_{\partial\Omega}=0,\quad
 p_x+\ii p_y=\ii\bar u v.
\end{equation}
It also satisfies 
\begin{equation}\label{stm:streamidentities}
 -\Delta p=bU-aV,\quad
 |\nabla p|^2=UV,\quad
 \text{and}\quad 
 p_\nu=-U\quad\text{on }\partial\Omega.
\end{equation}
In the sequel, $p$ is referred to as the stream function (associated with the eigenspinor $\psi$).
\end{proposition}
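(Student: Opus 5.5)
The plan is to construct $p$ as a potential of a closed real $1$-form, fix the constant using the boundary condition, and then read off the three identities from the eigenvalue system \eqref{pre:system}.

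\textbf{Step 1: closedness.} Write $w:=\ii\bar u v=P+\ii Q$ with $P,Q$ real and smooth on $\overline\Omega$ (smoothness by \cite[Theorem 3]{Vu}). We look for $p$ with $\nabla p=(P,Q)$, so we need the real $1$-form $P\,dx+Q\,dy$ to be closed, i.e.\ $Q_x-P_y=0$. Since $2\dz w=(P_x+Q_y)+\ii(Q_x-P_y)$, this amounts to $\Ima(2\dz w)=0$. By \eqref{eq:dirac_current_derivative},
\[
2\dz w=\ii\cdot 2\dz(\bar u v)=\ii\cdot\ii(bU-aV)=-(bU-aV),
\]
which is real. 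Hence the form is closed.

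\textbf{Step 2: existence, uniqueness and the boundary value.} Because $\Omega$ is simply connected, the closed form is exact, so a smooth potential $p$ exists and is unique up to an additive constant. Integrating $dp$ along $\partial\Omega$ (connected, as $\Omega$ is simply connected and smooth), we need $\partial_\tau p=0$ there. In complex notation $\partial_\tau p=\Rea(\bar\tau\,(p_x+\ii p_y))=\Rea(\overline{\ii\nu}\,\ii\bar u v)=\Rea(\bar\nu\bar u v)$. The IMB $v=\ii\nu u$ gives $\bar\nu\bar u v=\ii U$, so $\partial_\tau p=0$. Therefore $p$ is constant on $\partial\Omega$, and we normalise this constant to zero, which fixes $p$ uniquely.

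\textbf{Step 3: the identities.} For the Laplacian, $\Delta p=P_x+Q_y=\Rea(2\dz w)=-(bU-aV)$, which is the first identity. The second follows from $|\nabla p|^2=|w|^2=UV$. For the normal derivative, $p_\nu=\Rea(\bar\nu\,\ii\bar u v)=\Rea(\ii\cdot\ii U)=-U$ on $\partial\Omega$.

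The only step needing care is the boundary constancy in Step 2, including the orientation and sign conventions $\tau=\ii\nu$. Everything else is direct algebra with \eqref{pre:system}.
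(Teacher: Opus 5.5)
Your proof is correct and follows the paper's argument: the field $\ii\bar u v$ is curl-free by \eqref{eq:dirac_current_derivative}, so it has a potential on the simply connected domain, and the IMB makes its tangential component vanish on $\partial\Omega$, which lets you normalize $p|_{\partial\Omega}=0$. You also carry out the computations of $-\Delta p$, $|\nabla p|^2$ and $p_\nu$ that the paper leaves as "direct calculation," and your signs and orientation conventions there check out.
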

\begin{proof}
Note that any real vector field $X=(X_1,X_2)$ satisfies
\[
 2\dz(X_1+\ii X_2)
 =\operatorname{div}X+
             \ii(\partial_xX_2-\partial_yX_1).
\]
Hence, \eqref{eq:dirac_current_derivative} indicates that the real vector field identified with the complex scalar field $\ii\bar u v$ is curl-free, which then admits a potential antiderivative whose gradient is equal to $\ii\bar u v$, due to the simple connectedness of $\Omega$; denoting this potential as $p$, the second identity in \eqref{stm:streamgradient} is proved. On the other hand, by the IMB,
\begin{equation*}
\ii\bar uv=\ii\bar u(\ii\nu u)=-U\nu\quad \text{on $\partial \Omega$}.
\end{equation*}
This implies that the gradient of $p$ has zero tangential component along $\partial \Omega$; therefore, $p$ is constant along the boundary. Shifting $p$ by its boundary if necessary, we conclude the proof of \eqref{stm:streamgradient}. Finally, the remaining identities in \eqref{stm:streamidentities} are checked by direct calculation, which are omitted here.
\end{proof}

\subsection{The scalar equation of upper component and its weak form}
In this section, we focus on the upper component $u$ of the eigenspinor. In particular, we show that the stream function constructed in Proposition \ref{stm:stream} characterizes the energy cost of spatially modulating an eigenspinor’s upper component in the reduced scalar problem. To start, we note that by \eqref{pre:system},
\begin{equation}\label{stm:scalar}
 -\Delta u=k^2u\quad\hbox{in }\Omega,\quad
 2\bar\nu\db u+au=0\quad\hbox{on }\partial\Omega.
\end{equation}
The weak formulation of \eqref{stm:scalar} is defined via the sesquilinear form:
\begin{equation} \label{stm:form}
t_{\lambda,m}[w,z]
:=4\int_\Omega\db w\,\overline{\db z}\,dA +a\int_{\partial\Omega}(\Tr w)\overline{\Tr z}\,ds-k^2\int_\Omega w\bar z\,dA,
\end{equation}
for $w,z\in H^1(\Omega;\C)$; we also write $t_{\lambda,m}[w]:=t_{\lambda,m}[w,w]$. Hence, for $u$ solving \eqref{stm:scalar}, it satisfies
\begin{equation}\label{stm:weak}
 t_{\lambda,m}[u,z]=0 \quad(\forall z\in H^1(\Omega;\C)).
\end{equation}
The main result of this subsection is the following:

\begin{lemma} \label{stm:transform}
Set $W:=U-ap$. For every $\xi\in H^1(\Omega;\C)$,
\begin{equation}\label{stm:ground}
 t_{\lambda,m}[u\xi]
       =4\int_\Omega W|\db\xi|^2\,dA
                       +4a\int_\Omega p|\dz\xi|^2\,dA.
\end{equation}
\end{lemma}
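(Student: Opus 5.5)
The plan is a direct ground-state-transform computation. I first prove \eqref{stm:ground} for $\xi\in C^\infty(\overline\Omega)$. Then I extend it to $\xi\in H^1(\Omega;\C)$ by density: $u$ and $p$ lie in $C^\infty(\overline\Omega)$, so both sides of \eqref{stm:ground} are continuous in the $H^1$ norm, using the trace theorem for the boundary term. The argument uses only the first-order system \eqref{pre:system}, the identities of Proposition \ref{stm:stream}, and the divergence formulas \eqref{pre:green}; the weak form \eqref{stm:weak} is not needed.

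\textbf{Step 1: expand the form.} By \eqref{pre:system}, $\db(u\xi)=\tfrac{\ii a}{2}v\xi+u\db\xi$. Since $p_x+\ii p_y=2\db p$, \eqref{stm:streamgradient} reads $\ii\bar u v=2\db p$. Expanding $4|\db(u\xi)|^2$ therefore gives
\[
4|\db(u\xi)|^2=a^2V|\xi|^2+4U|\db\xi|^2+8a\,\Rea\bigl(\db p\,\xi\,\dz\bar\xi\bigr),
\]
where I used $\overline{\db\xi}=\dz\bar\xi$. Next I combine the zeroth-order terms with $-k^2U|\xi|^2$. Because $k^2=ab$ and $-\Delta p=bU-aV$, we get $a^2V-k^2U=a(aV-bU)=a\Delta p$. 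Hence
\[
t_{\lambda,m}[u\xi]=4\int_\Omega U|\db\xi|^2+a\int_\Omega\Delta p\,|\xi|^2+8a\,\Rea\int_\Omega\db p\,\xi\,\dz\bar\xi+a\int_{\partial\Omega}U|\xi|^2\,ds.
\]

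\textbf{Step 2: remove the Laplacian and the boundary term.} Green's formula gives $\int_\Omega\Delta p|\xi|^2=\int_{\partial\Omega}p_\nu|\xi|^2-\int_\Omega\nabla p\cdot\nabla|\xi|^2$. The boundary part cancels the term $a\int_{\partial\Omega}U|\xi|^2$ exactly, because $p_\nu=-U$. For real $g$ one has $\nabla p\cdot\nabla g=4\Rea(\db p\,\dz g)$, and $\dz|\xi|^2=\bar\xi\dz\xi+\xi\dz\bar\xi$. Combining with the cross term leaves
\[
4a\,\Rea\int_\Omega\db p\,\bigl(\xi\dz\bar\xi-\bar\xi\dz\xi\bigr).
\]

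\textbf{Step 3: transfer the derivative to $\xi$.} I integrate by parts in $\db$ using \eqref{pre:green}; the boundary term vanishes since $p|_{\partial\Omega}=0$. Write $F=\xi\dz\bar\xi-\bar\xi\dz\xi$. Then
\[
\db F=|\db\xi|^2-|\dz\xi|^2+\tfrac14\bigl(\xi\Delta\bar\xi-\bar\xi\Delta\xi\bigr),
\]
and the last bracket is purely imaginary. So the remaining integral equals $-4a\int_\Omega p\bigl(|\db\xi|^2-|\dz\xi|^2\bigr)$. Adding this to $4\int_\Omega U|\db\xi|^2$ gives $4\int_\Omega(U-ap)|\db\xi|^2+4a\int_\Omega p|\dz\xi|^2$, which is \eqref{stm:ground}.

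\textbf{Main obstacle.} The step needing the most care is the sign and conjugation bookkeeping in Steps 2–3. The boundary contributions must cancel exactly, which relies on $p_\nu=-U$ and $p=0$, and the second-order terms must drop out after taking real parts. Step 3 uses second derivatives of $\xi$, which is why I work with smooth $\xi$ first and pass to $H^1$ only at the end.
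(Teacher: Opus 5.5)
Your proof is correct and is essentially the paper's ground-state-transform computation. You reach the same intermediate identity \eqref{stm:remainder} and finish with the same integration by parts, using $p|_{\partial\Omega}=0$ and the fact that the second-order part of $\db(\xi\dz\bar\xi-\bar\xi\dz\xi)$ is purely imaginary. The only difference is how you cancel the zeroth-order and boundary terms: you use $a^2V-k^2U=a\Delta p$, Green's formula and $p_\nu=-U$, whereas the paper subtracts $\operatorname{Re}t_{\lambda,m}[u,u|\xi|^2]=0$, which encodes the same integration by parts.
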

\begin{proof}
First, we take $\xi\in C^\infty(\overline\Omega)$. Note that the product rule yields
\[
 |\db(u\xi)|^2
 =|\xi|^2|\db u|^2+U|\db\xi|^2
       +2\operatorname{Re}(\xi\,\bar u\,\db u\,
                                      \overline{\db\xi}).
\]
Also,
\[
 \overline{\db(u|\xi|^2)}
 =|\xi|^2\overline{\db u}
       +\bar u\bigl(\xi\,\overline{\db\xi}+\bar\xi\,\dz\xi\bigr).
\]
Using these identities, when substracting $\operatorname{Re}t_{\lambda,m}[u,u|\xi|^2]=0$ from $t_{\lambda,m}[u\xi]$, one sees that the zeroth-order volume and boundary terms, as well as the $|\xi|^2|\db u|^2$ terms, all cancel out exactly. Moreover, using the following identity derived from  \eqref{pre:system} and \eqref{stm:streamgradient}
\begin{equation}\label{stm:productidentity}
 \bar u\,\db u=a\,\db p,
\end{equation}
one can check that the remaining terms in $t_{\lambda,m}[u\xi]-\operatorname{Re}t_{\lambda,m}[u,u|\xi|^2]$ are
\begin{equation}\label{stm:remainder}
t_{\lambda,m}[u\xi]-\operatorname{Re}t_{\lambda,m}[u,u|\xi|^2]=
 4\int_\Omega U|\db\xi|^2+
 4a\operatorname{Re}\int_\Omega
       \db p\,\bigl(\xi\,\dz\bar\xi-\bar\xi\,\dz\xi\bigr).
\end{equation}
Let $A_\xi:=\xi\,\dz\bar\xi-\bar\xi\,\dz\xi$. Its derivative is
\[
 \db A_\xi
 =|\db\xi|^2-|\dz\xi|^2
                  +\frac14(\xi\,\Delta\bar\xi-\bar\xi\,\Delta\xi).
\]
with the last term being purely imaginary. Thus, recalling that $p$ is real and vanishes on the boundary, the second formula in
\eqref{pre:green} gives
\[
 \operatorname{Re}\int_\Omega\db p\,(\xi\,\dz\bar\xi-\bar\xi\,\dz\xi)
 =-\operatorname{Re}\int_\Omega p\,\db A_\xi
 =\int_\Omega p\bigl(|\dz\xi|^2-|\db\xi|^2\bigr).
\]
Substituting it into \eqref{stm:remainder} proves \eqref{stm:ground} for $\chi\in C^{\infty}(\overline{\Omega})$. Finally, for general $\xi\in H^1$, approximation it by smooth functions together with the $H^1$ boundedness of $t_{\lambda,m}$ concludes the proof.
\end{proof}

\subsection{A nonvanishing upper component determines the first mode}
Now we prove the main result of this section:

\begin{lemma}\label{stm:zerofree}
Suppose that $\psi=(u,v)^{\top}$ is an eigenspinor associated with a positive eigenvalue $\lambda$ and $u$ is nonvanishing on
$\overline\Omega$. Then, it holds that
\begin{equation}\label{stm:positiveweights}
 p>0\quad\hbox{in }\Omega,\quad
 W=U-ap\geq\min_{\partial\Omega}U>0
                   \quad\hbox{on }\overline\Omega.
\end{equation}
Moreover, $\lambda=\lambda_1^{+}$ and must be simple.
\end{lemma}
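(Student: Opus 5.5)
The plan is to prove the two positivity statements in \eqref{stm:positiveweights} by a minimum principle argument at critical points, and then to deduce $\lambda=\lambda_1^+$ and simplicity from Lemma~\ref{stm:transform} combined with the balance identity \eqref{pre:balance}.

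\emph{Positivity of $p$.} By Proposition~\ref{stm:stream}, $p=0$ on $\partial\Omega$ and $p_\nu=-U<0$ there, since $u\neq0$ on $\overline\Omega$. Hence $p>0$ in a neighbourhood of $\partial\Omega$ inside $\Omega$. Suppose $\min_{\overline\Omega}p\le0$. Then the minimum is attained at an interior point $z_0$, where $\nabla p(z_0)=0$ and $\Delta p(z_0)\ge0$. Since $p_x+\ii p_y=\ii\bar uv$ and $u(z_0)\ne0$, we get $v(z_0)=0$. Then \eqref{stm:streamidentities} gives $-\Delta p(z_0)=bU(z_0)>0$, because $b=\lambda-m>0$ by Lemma~\ref{pre:gap}. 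This contradicts $\Delta p(z_0)\ge0$, so $p>0$ in $\Omega$.

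\emph{Lower bound on $W$.} I apply the same trick to $W=U-ap$. From \eqref{stm:productidentity}, $2a\db p=2\bar u\db u$, while $2\db U=2\bar u\db u+2u\,\overline{\dz u}$. Hence $\db W=u\,\overline{\dz u}$, so interior critical points of $W$ are exactly the zeros of $\dz u$.

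For the Laplacian, I compute
\[
\Delta U=4|\dz u|^2+4|\db u|^2-2k^2U .
\]
Using $4|\db u|^2=a^2V$ and $a\Delta p=-k^2U+a^2V$, this gives
\[
\Delta W=4|\dz u|^2-k^2U .
\]
At an interior critical point this equals $-k^2U<0$, so $W$ has no interior local minimum. Since $W=U$ on $\partial\Omega$ (because $p=0$ there), it follows that $W\ge\min_{\partial\Omega}U>0$ on $\overline\Omega$.

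\emph{First mode and simplicity.} Since $u$ is smooth and nonvanishing on $\overline\Omega$, every $w\in H^1(\Omega;\C)$ can be written as $w=u\xi$ with $\xi=w/u\in H^1$. Lemma~\ref{stm:transform} together with $W>0$, $p>0$ and $a>0$ then gives $t_{\lambda,m}[w]\ge0$. Moreover, equality forces $\db\xi=0$ and $\dz\xi=0$ a.e., so $\xi$ is constant.

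Now let $0<\mu\le\lambda$ be any eigenvalue, with eigenspinor $(u',v')$ and constants $a'=\mu+m$, $b'=\mu-m$. I expand $t_{\lambda,m}[u']$ directly, using $4|\db u'|^2=a'^2V'$ and the balance identity \eqref{pre:balance} for $(u',v')$ to remove the boundary term. This gives
\[
t_{\lambda,m}[u']=a'(a'-a)\int_\Omega V'\,dA+a(b'-b)\int_\Omega U'\,dA .
\]
If $\mu<\lambda$, both coefficients are negative. Also $u'\not\equiv0$, since otherwise $v'\equiv0$ by \eqref{pre:system}. Hence $t_{\lambda,m}[u']<0$, which contradicts nonnegativity, so no positive eigenvalue lies below $\lambda$ and $\lambda=\lambda_1^+$. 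If $\mu=\lambda$, then $t_{\lambda,m}[u']=0$, so $u'=cu$ for a constant $c$. Then $v'=\frac{2}{\ii a}\db u'=cv$, so the eigenspace is one-dimensional and $\lambda$ is simple.

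The main obstacle I expect is the $W$ estimate: it depends on the computation of $\db W$ and $\Delta W$ coming out exactly as above, so that the critical-point sign argument applies. The remaining steps are routine once that identity is checked.
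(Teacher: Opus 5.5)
Your proposal is correct and follows the paper's proof: the same critical-point sign arguments for $p$ and for $W=U-ap$, using the identical computations $\db W=u\,\overline{\dz u}$ and $\Delta W=4|\dz u|^2-k^2U$, and then Lemma~\ref{stm:transform} together with the balance identity \eqref{pre:balance} to exclude lower positive eigenvalues and to force $u'=cu$. The only slip is in the wording: ``suppose $\min_{\overline\Omega}p\le0$'' always holds because $p=0$ on $\partial\Omega$, so you should write ``suppose $p(z_1)\le0$ for some $z_1\in\Omega$''; this directly gives an interior minimum point and makes the collar argument via $p_\nu=-U<0$ unnecessary.
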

\begin{proof}
We claim that $p$ does not have local minimum inside $\Omega$; then the vanishing trace of $p$ concludes that $p>0$ in $\Omega$. Indeed, at an interior critical point of $p$, the identity $|\nabla p|^2=UV$ and the fact that $U>0$ (recall that $u$ is nonvanishing) force $v=0$. Hence, $\Delta p=-bU<0$ at the critical point, which prevents it from being a local minimum.

For the second inequality in \eqref{stm:positiveweights}, note that \eqref{stm:productidentity} gives $\db W=u\,\overline{\dz u}$. Moreover,
\[
 \Delta U
 =2|\nabla u|^2-2k^2U
 =4|\dz u|^2+a^2V-2k^2U,
 \quad
 \Delta W=4|\dz u|^2-k^2U.
\]
At a critical point of the real function $W$, its $\db$ derivative
vanishes. Since $u\ne0$, the identity $\db W=u\,\overline{\dz u}$ forces $\dz u=0$, and hence $\Delta W=-k^2U<0$. Similarly as in the first part, we then see that $W$ does not have an interior local minimum, and hence \eqref{stm:positiveweights} is proved.

Next, we prove $\lambda=\lambda_1^{+}$. Note that, since $u$ is non-vanishing and smooth, multiplication by $u$ is an isomorphism of $H^1(\Omega)$. Therefore, \eqref{stm:ground} and
\eqref{stm:positiveweights} show that $t_{\lambda,m}[w]\geq0$ for every $w\in H^1(\Omega)$. On the contrary, suppose that there is another positive eigenvalue $\widetilde\lambda<\lambda$ and $(\widetilde u,\widetilde v)^\top$ is an associated eigenspinor. Then, by the definition of $t_{\lambda,m}$ and \eqref{pre:balance}, one calculates that
\begin{equation*}
\begin{aligned}
t_{\lambda,m}[\widetilde u]
 &=(\lambda-\widetilde\lambda)
 \left[\int_{\partial\Omega}|\widetilde u|^2
              -(\lambda+\widetilde\lambda)\int_\Omega|\widetilde u|^2
 \right] \\
 &=-(\lambda-\widetilde\lambda)
 \left[(\lambda+m)\|\widetilde u\|_2^2
        +(\widetilde\lambda+m)\|\widetilde v\|_2^2\right]<0,
\end{aligned}
\end{equation*}
which is a contradiction. This proves $\lambda=\lambda_1^{+}$.

Finally, suppose that $\lambda$ is not simple and $(\widehat u,\widehat v)^{\top}$ is another associated eigenspinor. The upper component $\widehat u$ satisfies $t_{\lambda,m}[\widehat u]=0$ and can be written $\widehat u=u\xi$, with $\xi\in H^1$. Then, the identity \eqref{stm:ground} and $W>0$ imply $\db\xi=\dz\xi=0$ almost everywhere in $\Omega$; hence $\xi$ is constant. Using \eqref{pre:system}, one can also prove that $\widehat v=\xi v$. This proves the simplicity of $\lambda$.
\end{proof}

\section{Boundary zeros and sign changes of the stream function}
\label{bdy:section}

The stream function from Proposition~\ref{stm:stream} is defined even when the eigenspinor vanishes; we now describe its behavior near a boundary zero of the eigenspinor. The main result of this section is the following Lemma \ref{bdy:oscillation}, which is a boundary-level partial converse to Lemma \ref{stm:zerofree}: we show that, near a boundary zero of the eigenspinor, the stream function necessarily possesses oscillating sign. This, together with Lemma \ref{stm:zerofree}, helps to prove the persistence of non-vanishing property of the first positive eigenspinor during deformation of domains; see Section \ref{con:section} for details.

Note that, by IMB, a boundary zero of either component of the eigenspinor is necessarily a common zero. In the sequel, the order of such a common zero refers to the least total degree of a nonzero homogeneous term in the Taylor expansions of its components; in particular, as proved below, this order is always finite.

\begin{lemma}\label{bdy:oscillation}
Let $\psi=(u,v)^{\top}$ be an eigenspinor associated with the positive eigenvalue $\lambda>m\geq0$. If $z_0\in\partial\Omega$ is a common zero of $\psi$, there exists a smooth conformal upper-half-disk chart $\Psi$ with $\Psi(0)=z_0$, an integer $n\geq1$, and a complex number $\alpha\ne0$ such that the stream function associated with $\psi$ admits the asymptotics
\begin{equation}\label{bdy:streamexpansion}
 p\circ\Psi(z)=
 \frac{|\alpha|^2}{2n+1}\operatorname{Im}(z^{2n+1})
                                    +O(|z|^{2n+2}).
\end{equation}
In particular, $p$ takes both positive and negative values inside $\Omega$. Consequently, an eigenspinor whose stream function satisfies $p\geq0$ in $\Omega$ has no common boundary zero.
\end{lemma}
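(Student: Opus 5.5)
Our plan is to flatten the boundary near $z_0$ conformally, so that the Dirac system keeps its Cauchy--Riemann structure, and then read off the leading homogeneous terms of $u$ and $v$ from the boundary condition.

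\textbf{Step 1: the chart and the pulled-back system.} Take a conformal map $\Psi$ from a neighbourhood of $0$ in the closed upper half-plane onto a neighbourhood of $z_0$ in $\overline\Omega$, with $\Psi(0)=z_0$ and the real axis sent to $\partial\Omega$. Such a map is smooth up to the boundary because $\partial\Omega$ is $C^\infty$ (Kellogg--Warschawski); we can also rotate so that $\Psi'(0)>0$. Set
\[
\tilde u:=u\circ\Psi,\qquad \tilde v:=(v\circ\Psi)\,\overline{\Psi'}.
\]
Since $\db(u\circ\Psi)=(\db u)\circ\Psi\,\overline{\Psi'}$ and $\dz(v\circ\Psi)=(\dz v)\circ\Psi\,\Psi'$, these satisfy
\[
\db\tilde u=\tfrac{\ii a}{2}\tilde v,\qquad \dz\tilde v=\tfrac{\ii b}{2}|\Psi'|^2\tilde u .
\]
The second equation is a perturbed Dirac system with a smooth positive weight. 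On $\{\Ima z=0\}$ the outward normal of $\Omega$ corresponds to $-\ii\Psi'/|\Psi'|$. The IMB then becomes $\tilde v=|\Psi'|\tilde u$ on the real axis, so $\tilde v/\tilde u$ is real and positive there.

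\textbf{Step 2: leading terms and the stream function.} Because $\tilde u$ is holomorphic up to zeroth order, its lowest homogeneous Taylor term $P_N$ at $0$ satisfies $\db P_N=0$. Indeed, the $\db$-derivative of the order-$N$ part equals $\tfrac{\ii a}{2}$ times the order-$(N-1)$ part of $\tilde v$. A Carleman/unique-continuation argument for this elliptic system gives finite vanishing order, so we can assume $\tilde u\sim c z^{N}$ and $\tilde v\sim d\bar z^{M}$. Comparing orders, matching the boundary condition on $\mathbb R$ forces $M=N$ and $d=|\Psi'(0)|c$ up to the normalization. Then the relation $\tilde v=|\Psi'|\tilde u$, which for homogeneous leading terms reads $d\,x^N=|\Psi'(0)|c\,x^N$, is consistent, with $n:=N\ge1$ because $z_0$ is a zero.

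For the stream function, $p_x+\ii p_y=\ii\bar u v$ transforms to $2\db(p\circ\Psi)=\ii\,\overline{\tilde u}\,\tilde v$. The right side is $\ii\,\bar c\,d\,\bar z^{2n}$ at leading order. Integrating, and using $p\circ\Psi=0$ on $\mathbb R$, gives
\[
p\circ\Psi=\frac{|\alpha|^2}{2n+1}\Ima(z^{2n+1})+O(|z|^{2n+2}),
\]
with $|\alpha|^2=\bar c d$ (real and positive by the matching above; rescaling $\Psi$ absorbs constants). The error is controlled because the next-order terms of $\tilde u,\tilde v$ are $O(|z|^{n+1})$. Finally, $\Ima(z^{2n+1})=r^{2n+1}\sin((2n+1)\theta)$ changes sign on $(0,\pi)$ since $2n+1\ge3$. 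Restricting to rays where $\sin$ equals $\pm1$ and taking small $r$ shows that $p$ takes both signs, which yields the last claim of the statement.

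\textbf{Main obstacle.} The delicate point is Step 2's order matching: proving that the vanishing order is finite, and that the leading parts are exactly $cz^n$ and $d\bar z^n$ rather than mixed polynomials. We would first use the Helmholtz equation, which after pullback becomes $-\Delta\tilde u=k^2|\Psi'|^2\tilde u$. Its leading homogeneous term is harmonic, and also holomorphic since $\db P_N=0$, hence equal to $cz^N$; the analogous argument gives $d\bar z^{M}$ for $\tilde v$. Finite order would follow from strong unique continuation at the boundary. For this we would reflect across $\mathbb R$ using the boundary condition, which makes $(\tilde u,\overline{\tilde v})$ extend to a solution of an elliptic system on a full disk. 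If the coefficients make the reflection awkward, we would instead fall back on a boundary Carleman estimate.
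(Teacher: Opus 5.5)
Your chart computations are correct. Your fields are the paper's half-spin fields in another normalization: with $h^2=\Psi'$, the paper uses $\chi_1=h\tilde u$ and $\chi_2=\tilde v/\bar h$, and for these your boundary relation $\tilde v=|\Psi'|\tilde u$ becomes $\chi_2=\chi_1$. The integration of $2\db(p\circ\Psi)=\ii\,\overline{\tilde u}\,\tilde v$ and the sign change along two rays are exactly as in the paper.

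Your degree matching is a valid, slightly more elementary replacement for the paper's reading-off of leading terms, \emph{provided} the spinor vanishes to finite order at $0$. If $N=\operatorname{ord}\tilde u\le\operatorname{ord}\tilde v$, then $\db P_N=0$, so $P_N=cz^N$. The relation $\tilde v=|\Psi'|\tilde u$ on $\R$ then forces a nonzero $x^N$ term in $\tilde v$, which gives equal orders and $Q_N=|\Psi'(0)|c\,\bar z^N$. The case $\operatorname{ord}\tilde v\le\operatorname{ord}\tilde u$ is symmetric. As written, you assert $\db P_N=0$ before comparing orders; the comparison has to come first.

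The genuine gap is the finiteness of the vanishing order at the boundary zero. This is the real content of the lemma, and you leave it to an unspecified ``Carleman/unique-continuation argument''. That step is not off-the-shelf, for three reasons. The zero lies on the boundary. $\partial\Omega$ is only $C^\infty$, so after flattening the weight $|\Psi'|$ is merely smooth at the axis and there is no analyticity up to the boundary. The scalar problem for $u$ carries the complex oblique condition $\partial_\nu u+\ii\partial_\tau u+au=0$, so a boundary Carleman estimate would have to be built for it, and you do not supply one.

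The reflection you mention is the right idea, but two things need fixing. First, with your normalization a pure swap is discontinuous across $\R$ because of the factor $|\Psi'|$; you need the half-spin fields. Setting $\widehat\chi_1(z)=\chi_2(\bar z)$ and $\widehat\chi_2(z)=\chi_1(\bar z)$ for $\Ima z<0$ then gives a Lipschitz $F=(\widehat\chi_1,\overline{\widehat\chi_2})^\top$ with $\db F=\mathcal B\bar F$. Second, for $m>0$ the coefficients jump from $a\omega$ to $b\omega$ across the axis. So you are in the setting $|\db F|\le C|F|$ with $C\in L^\infty$ and $F$ only Lipschitz.

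The missing ingredient, which the paper supplies, is the Carleman--Bers similarity principle:
\begin{enumerate}[label=(\roman*)]
\item rewrite the system as $\db F=CF$ with $C:=(\mathcal B\bar F)F^*/|F|^2\in L^\infty$;
\item solve $Q=I+\mathcal T_\varepsilon(CQ)$ by contraction on a small disk, which gives a H\"older continuous $Q$ with bounded inverse;
\item observe that $H=Q^{-1}F$ is holomorphic and not identically zero, so $H=z^nH_0$ with $H_0(0)\neq0$.
\end{enumerate}
This yields finite order, and in fact $F=z^nQ(0)H_0(0)+O(|z|^{n+\beta})$. With that step inserted, the rest of your argument goes through.
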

The $2n+1$ order of the stream function indicated in \eqref{bdy:streamexpansion} can be intuitively illustrated. Suppose that the upper component $u$ is of order $n$. Then the lower component $v$ is of the same order by the IMB; hence, the product $uv$ is of order $2n$. Recalling that the gradient of $p$ is proportional to $uv$ as seen by \eqref{stm:streamgradient}, the $2n+1$ order of $p$ is then clear.

The remainder of this section is dedicated to the proof of Lemma \ref{bdy:oscillation}.

\subsection{Conformal chart and representation of eigenspinor}
We first construct the conformal chart $\Psi$. This is given by the smooth conformal boundary regularity of $\partial \Omega$, which provides a holomorphic map $\Psi$ from a small upper half-disk
\[
 B_\varepsilon^+=\{z\in\C:|z|<\varepsilon,\ \operatorname{Im}z>0\}
\]
onto a neighborhood of $z_0$ in $\Omega$. It satisfies $\Psi(0)=z_0$, and has nonzero derivative within $B_\varepsilon^+$ by shrinking the radius $\varepsilon$ if necessary. The real diameter of $B_\varepsilon^+$ maps to $\partial\Omega$ and the upper half-disk maps to the interior of $\Omega$.

Next, we list basic properties of the components $u$ and $v$ under the chart $\Psi$. First, choose a holomorphic square root $h$ of $\Psi^{\prime}$ on the chart and define
\begin{equation}\label{bdy:halfs}
\omega:=|\Psi'|=|h|^2,\quad
 \chi_1:=h(u\circ\Psi),\quad
 \chi_2:=\bar h(v\circ\Psi).
\end{equation}
The branch $h$ is chosen smoothly up to the diameter thanks to the non-vanishing derivative $\Psi'$. Recalling that $a=\lambda+m$ and $b=\lambda-m$ as in \eqref{pre:ab}, the chain rule gives
\begin{equation}\label{bdy:flat}
 \db\chi_1=\frac{\ii a\omega}{2}\chi_2,\quad
 \dz\chi_2=\frac{\ii b\omega}{2}\chi_1.
\end{equation}
Moreover, substituting the IMB into \eqref{bdy:halfs} gives
\begin{equation}\label{bdy:flatmit}
 \chi_2
 =\bar h\,\frac{\Psi'}{|\Psi'|}(u\circ\Psi)
 =h(u\circ\Psi)=\chi_1
                    \quad\hbox{on the real diameter of $B_\varepsilon^+$}.
\end{equation}

\subsection{Reflection and a bounded first-order system}
In this part, we construct the reflection continuation of $(\chi_1,\chi_2)^{\top}$ and present the first-order system satisfied by the continued spinor, which will be applied to study its order near a boundary zero. First, on the lower half-disk of $B_\varepsilon$, we define the reflected fields
\[
 \widehat\chi_1(z):=\chi_2(\bar z),\quad
 \widehat\chi_2(z):=\chi_1(\bar z),
\]
and retain the original fields on the upper half-disk. Note that there is no complex conjugation of their values in this reflection, and only the components and the arguments are interchanged. In particular, thanks to \eqref{bdy:flatmit}, the reflected fields are continuous across the diameter. Moreover, $\widehat\chi_k$ are locally Lipschitz since they are smooth on each closed half-disk and agree on the diameter. To recall the jump of derivatives explicitly, define the bounded measurable coefficients
\[
 a_{\mathrm{ref}}(z):=
 \begin{cases}a\omega(z),&\operatorname{Im}z\geq0,\\
               b\omega(\bar z),&\operatorname{Im}z<0,\end{cases}
 \qquad
 b_{\mathrm{ref}}(z):=
 \begin{cases}b\omega(z),&\operatorname{Im}z\geq0,\\
               a\omega(\bar z),&\operatorname{Im}z<0.\end{cases}
\]
Then it is direct to check that
\[
 \db\widehat\chi_1=\ii a_{\mathrm{ref}}\widehat\chi_2/2,\qquad
 \dz\widehat\chi_2=\ii b_{\mathrm{ref}}\widehat\chi_1/2,
\]
which holds almost everywhere in the weak sense. We will work mainly with the spinor form:
\begin{equation}\label{bdy:vekua}
 \db F=\mathcal B\,\bar F,
\end{equation}
where
\begin{equation}\label{bdy:vector}
 F:=(\widehat\chi_1,\overline{\widehat\chi_2})^{\top},\qquad
 \mathcal B:=
 \begin{pmatrix}0&\ii a_{\mathrm{ref}}/2\\
                -\ii b_{\mathrm{ref}}/2&0\end{pmatrix}\in L^\infty.
\end{equation}

\subsection{An explicit matrix factorization}
In this part, we prove an explicit matrix factorization of the spinor $F$ introduced in \eqref{bdy:vector}:
\begin{equation} \label{eq:F_factorization}
F(z)=Q(z)H(z),
\end{equation}
where $Q(z)$ is continuous with a uniformly bounded inverse almost everywhere in $B_\varepsilon$, and $H(z)$ is holomorphic. To do this, we first introduce
\begin{equation}\label{bdy:coefficient}
 C(z):=
 \begin{cases}
 \displaystyle\frac{(\mathcal B(z)\bar F(z))F(z)^*}{|F(z)|^2},
                      &F(z)\ne0,\\[1ex]
 0,&F(z)=0.
 \end{cases}
\end{equation}
It is clear that $\|C(z)\|\leq\|\mathcal B(z)\|$. Moreover $CF=\mathcal B\bar F$ for $z\in B_\varepsilon$. Hence \eqref{bdy:vekua} is recast as
\begin{equation*}
\db F=CF,
\end{equation*}
i.e., introducing the matrix $C$ helps to remove the conjugation. With the notion of $C$, we now construct the matrix $Q$ in the factorization \eqref{eq:F_factorization} by a fixed point argument. For a scalar or matrix function $f$ on $B_\varepsilon$, recall that the Cauchy-Green operator is defined as
\begin{equation}\label{bdy:cauchy}
 (\mathcal T_\varepsilon f)(z):=\frac1\pi\int_{B_\varepsilon}\frac{f(\zeta)}{z-\zeta}\,dA(\zeta).
\end{equation}
It is a right inverse of the derivative $\db$: in the sense of distribution, we know  $\db\mathcal T_\varepsilon f=f$ on $B_\varepsilon$; see \cite[Section 2]{BBC}. Since $B_\varepsilon\subset B_{2\varepsilon}(z)$ for $z\in B_\varepsilon$, one can check
\[
 \|\mathcal T_\varepsilon f\|_\infty
 \leq\frac{\|f\|_\infty}{\pi}
                     \int_{B_{2\varepsilon}(0)}|\zeta|^{-1}\,dA(\zeta)
 =4\varepsilon\|f\|_\infty.
\]
Shrinking $\varepsilon$ if necessary so that $\delta=4\varepsilon\|C\|_\infty<1/3$, the map $Q\mapsto I+\mathcal T_\varepsilon(CQ)$ is then a contraction on the Banach space of bounded matrix fields; we define $Q$ as its unique fixed point. In particular, $Q$ satisfies that
\begin{equation}\label{bdy:gauge}
 Q=I+\mathcal T_\varepsilon(CQ),\quad
\|Q-I\|_\infty\leq\frac{\delta}{1-\delta}<\frac12,
\end{equation}
and
\begin{equation*}
\db Q=CQ
\end{equation*}
in the sense of distributions. By \eqref{bdy:gauge}, $Q$ is invertible almost everywhere with a uniformly bounded inverse. Now we show the regularity of $Q$. Note that the derivative $\dz\mathcal T_\varepsilon$ is the Beurling singular integral, which is bounded on $L^r$ for $1<r<\infty$; see \cite[Section 2, equations (2.9)--(2.11)]{BBC}. Since $CQ$ is bounded and supported in a bounded disk, \eqref{bdy:gauge} implies that $Q\in W^{1,r}_{\rm loc}$ for every finite $r>2$. Then the Sobolev embedding gives a locally Hölder continuous representative of $Q$, which also holds for the inverse $Q^{-1}$.

Finally, we prove the factorization \ref{eq:F_factorization}. Since $F$ is Lipschitz and bounded, the weak product rule is justified:
\[
 \db(Q^{-1}F)
 =-Q^{-1}(\db Q)Q^{-1}F+Q^{-1}\db F
 =-Q^{-1}CF+Q^{-1}CF=0.
\]
Hence, $H:=Q^{-1}F$ is holomorphic. This concludes the proof.

\subsection{Common order of components and asymptotics}
In this part, we justify the intuition that the two components of an eigenspinor are of the same order near a common boundary zero (see the illustration following Lemma \ref{bdy:oscillation}). To be precise, we will prove that
\begin{equation}\label{bdy:leading}
 \chi_1=\alpha z^n+O(r^{n+1}),\quad
 \chi_2=\alpha\bar z^n+O(r^{n+1}),
 \quad r=|z|,\quad \alpha\ne0,
\end{equation}
for some $n\geq 1$ and $\alpha\neq 0$. Indeed, since the holomorphic factor $H$ in the factorization \ref{eq:F_factorization} does not vanish identically (otherwise $F\equiv 0$, giving a trivial eigenspinor), we can write near a common zero of the eigenspinor that
\[
 H(z)=z^nH_0(z)\quad \text{with } n\geq 1 \text{ and }  H_0(0)\ne0.
\]
Then, the Hölder continuity and invertibility of $Q$ imply that
\[
 F(z)=z^n c+O(|z|^{n+\beta})\quad \text{with }
 c:=Q(0)H_0(0)\ne0\text{ and } 0<\beta<1.
\]
Write $c=(\alpha,\gamma)^{\top}$; then the leading terms of the entries of $F$ are
$\chi_1\sim\alpha z^n$ and $\chi_2\sim\bar\gamma\bar z^n$, respectively. Noting that the boundary condition \eqref{bdy:flatmit} gives $\alpha=\bar\gamma$ and the Taylor expansion indeed upgrades the exponent $n+\beta$ to $n+1$, the proof of \eqref{bdy:leading} then completes.

\subsection{Expansion of the stream function}

Finally, we prove the desired expansion \eqref{bdy:streamexpansion}, by which the sign change of the stream function is clear. Let $\widetilde p:=p\circ\Psi$. By direct calculation,
\begin{equation}\label{bdy:streampullback}
 2\db\widetilde p
 =\ii(\bar u v)\circ\Psi\,\overline{\Psi^{\prime}}
 =\ii\overline{\chi_1}\chi_2
 =\ii|\alpha|^2\bar z^{2n}+O(r^{2n+1}),
\end{equation}
where the identity $\overline{\Psi'}=\bar h^2$ in \eqref{bdy:halfs} is applied. Noting that $2\db\operatorname{Im}(z^\ell)=\ii\ell\bar z^{\ell-1}$, we know the difference
\[
 R(z):=\widetilde p(z)-
              \frac{|\alpha|^2}{2n+1}\operatorname{Im}(z^{2n+1})
\]
is real and satisfies $|\nabla R(z)|=O(r^{2n+1})$. Since $R(z)$ vanishes at the origin, integrating $\nabla R(z)|$ along the segment from $0$ to $z$ in the upper half-disk gives $R(z)=O(r^{2n+2})$, which proves \eqref{bdy:streamexpansion}. To discuss the sign change of $p$, we choose rays with arguments $\theta_+:=\pi/[2(2n+1)]$ and $\theta_-:=3\pi/[2(2n+1)]$, which satisfy $\sin((2n+1)\theta_+)=1$ and $\sin((2n+1)\theta_-)=-1$. Hence, along these rays, the respective positive and negative leading terms in \eqref{bdy:streamexpansion} dominate its remainder for a small radius. This  concludes the proof of Lemma~\ref{bdy:oscillation}.
\section{Conformal continuation of the first positive eigenspinor}
\label{con:section}

The preceding results of Sections \ref{stm:section} and \ref{bdy:section}, particularly Lemma \ref{stm:zerofree} and \ref{bdy:oscillation}, transfer the non-vanishing property of the upper component of an eigenspinor into both an ordering criterion and a stable property of eigenfunctions. In this section, we apply these results along with a deformation argument to show that, for every smooth domain, the first positive eigenspinor always has a non-vanishing upper component, and hence the first positive eigenvalue is simple; see Theorem \ref{con:nonvanishing}.

\subsection{Conformal deformation of domain}
\label{con:path-overview}

In this part, we construct a smooth conformal deformation from a disk to the target domain $\Omega$ and pulls the eigenvalue problems on $\Omega$ back to the unit disk. This provides a fixed setting in which eigenvalues and eigenspinors can be compared throughout the continuation. To start, choose a Riemann map $\Phi:\D\to\Omega$, with $\D$ being the unit disk. By the Kellogg-Warschawski theorem, $\Phi$ extends to the whole closure $\overline\D$ with $\Phi'(z)\ne0$ for $z\in\overline\D$; see \cite{Pommerenke}. Then we define
\begin{equation}\label{con:path}
 F_t(z):=
 \begin{cases}
  [\Phi(tz)-\Phi(0)]/t,&0<t\leq1,\\
  \Phi'(0)z,&t=0,
 \end{cases}
 \qquad \Omega_t:=F_t(\D).
\end{equation}
It is direct to verify that 1) $\Omega_0$ is a disk and $\Omega_1$ is a translate of $\Omega$, 2) $F_{t}$ is injective for $t>0$, and 3) $F_{t}(z)$ depends smoothly on $(t,z)$. This gives the desired deformation path.

Now we construct the pullback of eigenspinor. First, let $\omega_t:=|F_t'|$. Then the compactness of $[0,1]\times\overline\D$ gives constants $c_0,C_0>0$ such that
\begin{equation}\label{con:weights}
 c_0\leq\omega_t(z)\leq C_0,
 \quad \sup_{0\leq t\leq1}\|\omega_t\|_{C^j(\overline\D)}<\infty
 \quad\text{for every integer }j\geq0.
\end{equation}
Hence, a holomorphic logarithm $L$ of $\Phi^{\prime}$ exists on $\D$ and extends smoothly to the boundary. Consequently, the functions
\[
 q_t(z):=\exp\bigl(L(tz)/2\bigr)
\]
form a smooth family of holomorphic square roots of $F_t^{\prime}$. Now, for a spinor $\psi=(u,v)^{\top}$ on $\Omega_t$, we first introduce the \emph{half-spin pullback}
\begin{equation}\label{con:halfspin}
 \chi_1:=q_t(u\circ F_t),\quad
 \chi_2:=\overline{q_t}(v\circ F_t).
\end{equation}
This pullback is not unitary with respect to the Lebesgue measure: it is direct to check that
\begin{equation}\label{con:normalization}
 \|\psi\|_{L^2(\Omega_t)}^2
       =\int_\D\omega_t\bigl(|\chi_1|^2+|\chi_2|^2\bigr).
\end{equation}
Later, a unitary pullback will be constructed by incorporating the factor $\omega_t$. Before that, we list several properties of the spinor $\chi=(\chi_1,\chi_2)^{\top}$. Recalling \eqref{pre:system}, one can check
\begin{equation}\label{con:weighted-equations}
 \db\chi_1=\frac{\ii(\lambda+m)}2\omega_t\chi_2,
 \qquad
 \dz\chi_2=\frac{\ii(\lambda-m)}2\omega_t\chi_1.
\end{equation}
The IMB of $(u,v)^{\top}$ also leads to the boundary condition of $\chi$
\begin{equation}\label{con:fixed-boundary}
 \chi_2=\ii z\chi_1\quad\text{on }\partial\D,
\end{equation}
where one should note that there is no unit tangential appearing. Finally, $\chi$ satisfies the transformed eigenproblem
\begin{equation}\label{con:generalized-eigenproblem}
 (D_0+m\omega_t\sigma_3)\chi=\lambda\omega_t\chi.
\end{equation}

Now, we introduce the \emph{unitary pullback}
\begin{equation*}
\widehat\chi=\omega_t^{1/2}\chi .
\end{equation*}
It is clear that $\chi$ satisfies the same boundary condition as \eqref{con:fixed-boundary}. Thus, on the fixed Hilbert space $\mathcal H=L^2(\D;\C^2)$, the original Dirac operator is unitarily equivalent to
\begin{equation}\label{con:fixed-operator}
 A_t:=\omega_t^{-1/2}D_0\omega_t^{-1/2}+m\sigma_3,
 \qquad
 \operatorname{Dom}A_t=E,
\end{equation}
where
\begin{equation}\label{con:fixed-domain}
 E:=\{\widehat\chi\in H^1(\D;\C^2):
          \Tr\widehat\chi_2=\ii z\Tr\widehat\chi_1\text{ on }\partial\D\}.
\end{equation}

\subsection{Spectral continuity}

In this part, we justify that the operator family \eqref{con:fixed-operator} is continuous in the norm-resolvent sense, which then leads to the continuity of the first positive eigenvalue.

\begin{lemma}\label{con:spectral}
The self-adjoint operators $A_t$ have compact resolvent and depend continuously on $t$ in the norm-resolvent sense. Consequently, the map $t\longmapsto\lambda_1^+(\Omega_t;m)$ is continuous on $[0,1]$.
\end{lemma}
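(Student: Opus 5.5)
The plan is to compare the resolvents of $A_t$ through a fixed reference operator and to factor out the weights, so that the $t$-dependence sits only in bounded multiplication operators. Write $M_t$ for multiplication by $\omega_t^{1/2}$. By \eqref{con:weights}, $M_t$ and $M_t^{-1}$ are bounded, uniformly in $t$, together with all their derivatives. By smoothness in $(t,z)$, $t\mapsto M_t^{\pm1}$ is norm-continuous as bounded operators on $L^2(\D;\C^2)$, and also on $H^1$.

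First I fix the reference operator. Let $D_0$ be the massless Dirac operator on $\D$ with domain $E$; by \cite{BFSVSA} it is self-adjoint with compact resolvent, and by Lemma \ref{pre:gap} it is invertible with compact inverse $D_0^{-1}:L^2\to E\subset H^1$. Multiplication by the real function $\omega_t^{-1/2}$ preserves the boundary relation $\Tr\widehat\chi_2=\ii z\Tr\widehat\chi_1$, so $M_t^{\pm1}$ map $E$ onto itself. Hence $B_t:=M_t^{-1}D_0M_t^{-1}$ with domain $E$ is self-adjoint: it is unitarily equivalent, via the spinor pullback, to the massless operator on $\Omega_t$. Its inverse is $B_t^{-1}=M_tD_0^{-1}M_t$, which is compact and norm-continuous in $t$ as a product of norm-continuous bounded factors with one fixed compact factor.

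Next I add the mass term. $A_t=B_t+m\sigma_3$ is a bounded self-adjoint perturbation with a $t$-independent bounded term, so $A_t$ is self-adjoint on $E$. Compactness of the resolvent follows from the second resolvent identity, which expresses it through the compact $B_t^{-1}$ times a bounded operator. For norm-resolvent continuity I write
\[
(A_t-\ii)^{-1}=B_t^{-1}\bigl(I+(m\sigma_3-\ii)B_t^{-1}\bigr)^{-1}.
\]
Invertibility of the bracket follows from the identity $(A_t-\ii)B_t^{-1}=I+(m\sigma_3-\ii)B_t^{-1}$, both sides being bijections, since $A_t-\ii$ is bijective from $E$ onto $L^2$. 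Inversion is continuous on the set of invertible operators, and $B_t^{-1}$ is norm-continuous, so $t\mapsto(A_t-\ii)^{-1}$ is norm-continuous.

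Finally I deduce continuity of $\lambda_1^+$. By Lemma \ref{pre:gap}, $\lambda_1^+(\Omega_t;m)>m$ and $-\lambda_1^+$ is the largest negative eigenvalue. Norm-resolvent continuity gives continuity of the spectrum in the following sense: for any interval $(\alpha,\beta)$ with endpoints outside $\mathrm{Spec}(A_{t_0})$, the Riesz projections, given by contour integrals of the resolvent, are norm-continuous near $t_0$. Their ranks are therefore locally constant, so the eigenvalues counted with multiplicity move continuously. I would isolate $\lambda_1^+(t_0)$ with a small interval. Then $[0,\lambda_1^+(t_0)-\eta]$ contains no spectrum of $A_{t_0}$, hence none of $A_t$ for $t$ near $t_0$. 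This gives continuity of $t\mapsto\lambda_1^+(\Omega_t;m)$.

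The main obstacle I expect is the domain bookkeeping: checking that $M_t$ preserves $E$, and that the pulled-back operator is exactly $M_t^{-1}D_0M_t^{-1}$ plus $m\sigma_3$ on $E$. The derivative terms of $\omega_t^{-1/2}$ must cancel correctly under the half-spin pullback \eqref{con:halfspin}. Once that is settled, the rest is soft functional analysis.
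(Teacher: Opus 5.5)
Your proposal is correct. It reaches norm-resolvent continuity by a different mechanism than the paper.

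\textbf{The paper's route.} The paper works directly on the fixed domain $E$. It uses that $t\mapsto A_t$ is continuous in $\mathcal L(E,\mathcal H)$ and that $(A_{t_0}-\ii)^{-1}$ maps $\mathcal H$ boundedly into $E\subset H^1$ (from \cite{BFSVSA}). Hence $B_{t,t_0}=(A_t-A_{t_0})(A_{t_0}-\ii)^{-1}$ is small and $(A_t-\ii)^{-1}=(A_{t_0}-\ii)^{-1}(I+B_{t,t_0})^{-1}$. Self-adjointness and compactness are inherited from $D_{\Omega_t,m}$ through the unitary equivalence. Because $D_0$ differentiates the weight, this route needs continuity of $t\mapsto\omega_t^{-1/2}$ in $C^1(\overline\D)$, together with the $H^1$ resolvent bound.

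\textbf{Your route.} You factor $B_t^{-1}=M_tD_0^{-1}M_t$ around the single fixed inverse $D_0^{-1}$, which exists by case iv) in the proof of Lemma \ref{pre:gap}. This avoids both requirements above:
\begin{itemize}
\item norm continuity only needs $\|\omega_t^{1/2}-\omega_{t_0}^{1/2}\|_{L^\infty}\to0$;
\item compactness of the resolvent comes for free from compactness of $D_0^{-1}$;
\item the mass is a $t$-independent bounded perturbation, absorbed by the exact identity $(A_t-\ii)^{-1}=B_t^{-1}\bigl(I+(m\sigma_3-\ii)B_t^{-1}\bigr)^{-1}$.
\end{itemize}
You could even get self-adjointness of $B_t$ without the pullback, since $M_tD_0^{-1}M_t$ is bounded, self-adjoint, injective and has dense range $E$. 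You also supply the spectral step the paper omits: the gap at $0$ plus upper semicontinuity of the spectrum gives the lower bound on $\lambda_1^+(t)$, and rank constancy of the Riesz projection near $\lambda_1^+(t_0)$ gives the upper bound.

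\textbf{The obstacle you anticipate does not arise.} The half-spin pullback intertwines $D_0$ exactly with $\omega_t$ times the pullback of $D_{\Omega_t,0}$; this is the same chain-rule computation that gives \eqref{con:weighted-equations}. In your approach you never expand $D_0M_t^{-1}$, so no derivative terms of the weight appear. All that is needed is that $M_t^{\pm1}$ preserve $E$, which you verified.
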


Note that the continuity of $\lambda_1^+$ follows by a standard perturbation argument based on the norm-resolvent continuity (see, e.g., \cite[Chapter VII]{Kato}). The key points are 1) $\lambda_1^{+}$ does not reach zero along the deformation by Lemma \ref{pre:gap}, and 2) the possible collision between $\lambda_1^{+}$ and $\lambda_2^{+}$ hurts only the differentiability, not the continuity. The details are omitted here. In the following, we only prove the norm-resolvent continuity.

\begin{proof}[Proof of Lemma \ref{con:spectral}]
The self-adjointness of $A_{t}$ and the compactness of the resolvent follow from that of the original Dirac operator (see Section \ref{sec:general_spectrum}) and the unitary equivalence. Moreover, the continuity of $t\mapsto A_t$ in $\mathcal L(E,\mathcal H)$ follows directly by that of the coefficients in \eqref{con:fixed-operator}. Now we justify the norm-resolvent continuity. Fix $t_0 \in [0,1]$ and define
\[
 B_{t,t_0}:=(A_t-A_{t_0})(A_{t_0}-\ii)^{-1}.
\]
Then $\|B_{t,t_0}\|_{\mathcal L(\mathcal H)}\to0$ as $t\to t_0$; hence, for sufficiently small $t-t_0$
\[
 (A_t-\ii)^{-1}=(A_{t_0}-\ii)^{-1}(I+B_{t,t_0})^{-1},
\]
which justifies the norm-resolvent continuity together with the $H^1$ boundedness of resolvent $(A_{t_0}-\ii)^{-1}\in L(\mathcal H,E)$ (see \cite{BFSVSA}).
\end{proof}
We also record the uniform regularity and smooth compactness of normalized eigenspinors along the deformation. This enables the closedness argument used later (in Section \ref{sec:simple_nonvanishing}) to prove the non-vanishing property of the first positive eigenspinor.

\begin{lemma}\label{con:compactness}
Suppose $t_j\to t_*\in [0,1]$ and $\psi_j$ are $L^2$-normalized IMB eigenspinors on $\Omega_{t_j}$, with eigenvalues $\lambda_j$ in a bounded set. Then the associated half-spin pullbacks $\chi_{j}$ are uniformly bounded in $H^s(\D)$ for every $s\geq 0$. Passing to a subsequence for which $\lambda_j\to\lambda_*$ if necessary, $\chi_{j}$ converge in $C^r(\overline\D)$ to a smooth, nonzero normalized eigenspinor pullback at $t_*$ and $\lambda_*$ for all $r\geq 0$. The same compactness holds for the unitary pullbacks $\hat{\chi}_{j}$.
\end{lemma}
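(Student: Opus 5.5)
We would prove Lemma~\ref{con:compactness} by elliptic regularity for the pulled-back system on the fixed disk, uniformly in $t$, followed by a compactness extraction. The key structural fact is \eqref{con:weights}: all coefficients $\omega_t$ are bounded above and below and bounded in every $C^j(\overline\D)$, uniformly in $t\in[0,1]$. By \eqref{con:normalization} and $c_0\leq\omega_t\leq C_0$, the $L^2$-normalization of $\psi_j$ gives $\|\chi_j\|_{L^2(\D)}\leq c_0^{-1/2}$. The pullbacks satisfy the first-order system \eqref{con:weighted-equations} with the $t$-independent boundary condition \eqref{con:fixed-boundary}. Equivalently, we use \eqref{con:generalized-eigenproblem}: $D_0\chi_j=(\lambda_j\omega_{t_j}-m\omega_{t_j}\sigma_3)\chi_j$, where $D_0$ is the massless IMB Dirac operator on $\D$ with fixed domain $E$.

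\textbf{Bootstrap.} We use the regularity estimate for the fixed operator $D_0$ on $E$. Since $D_0$ with this local (Shapiro--Lopatinskii) boundary condition is elliptic, one has for every $s\geq0$
\[
\|\chi\|_{H^{s+1}(\D)}\leq C_s\bigl(\|D_0\chi\|_{H^{s}(\D)}+\|\chi\|_{L^2(\D)}\bigr),
\]
for $\chi$ satisfying \eqref{con:fixed-boundary}. For $s=0$ this is the $H^1$ bound of \cite{BFSVSA} applied to $D_0$; for higher $s$ it follows from the smooth regularity of \cite{Vu}, made quantitative by the standard tangential difference-quotient argument on the disk, since the boundary condition $\chi_2=\ii z\chi_1$ commutes well with the rotation vector field $\partial_\theta$. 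The right-hand side $(\lambda_j-m\sigma_3)\omega_{t_j}\chi_j$ is bounded in $H^s$ by $C(\sup_j|\lambda_j|+m)\sup_t\|\omega_t\|_{C^s}\|\chi_j\|_{H^s}$. Induction on $s$ therefore yields $\sup_j\|\chi_j\|_{H^s(\D)}<\infty$ for all $s$.

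\textbf{Limit.} Rellich compactness plus Sobolev embedding, combined with a diagonal argument over $r$, give a subsequence with $\lambda_j\to\lambda_*$ and $\chi_j\to\chi_*$ in every $C^r(\overline\D)$. Passing to the limit in \eqref{con:weighted-equations} and \eqref{con:fixed-boundary} is immediate, because $\omega_{t_j}\to\omega_{t_*}$ in $C^r$ by the smooth dependence of $F_t$ on $(t,z)$. Hence $\chi_*$ is a smooth pullback eigenspinor at $(t_*,\lambda_*)$. Normalization passes to the limit through \eqref{con:normalization} with $\omega_{t_j}\to\omega_{t_*}$ uniformly, so $\int_\D\omega_{t_*}|\chi_*|^2=1$ and in particular $\chi_*\neq0$. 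Since $\widehat\chi=\omega_t^{1/2}\chi$ with $\omega_t^{\pm1/2}$ uniformly bounded in $C^j$, the same conclusions transfer to the unitary pullbacks.

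The main obstacle is the uniform higher-order boundary estimate for $D_0$ with the infinite-mass condition. If citing it is insufficient, we would first try differentiating tangentially with $\partial_\theta$. The boundary condition on $\partial\D$ is preserved up to lower-order terms: $\partial_\theta(\ii z\chi_1)=\ii z\partial_\theta\chi_1+\ii\,\ii z\chi_1$, which is absorbed by the twisted rotation generator acting as $\partial_\theta$ on $\chi_1$ and as $\partial_\theta-\ii$ on $\chi_2$. Normal derivatives would then be recovered from the equations \eqref{con:weighted-equations} themselves, since $\db$ and $\dz$ together with $\partial_\theta$ control the full gradient.
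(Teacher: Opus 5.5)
Your proposal is correct and follows essentially the same route as the paper. Both use elliptic bootstrapping for the pulled-back system \eqref{con:weighted-equations} with the fixed boundary condition \eqref{con:fixed-boundary}, then Sobolev embedding and passage to the limit via the smooth convergence of the coefficients $\omega_{t_j}$; you also supply details the paper only sketches, such as the initial $L^2$ bound from \eqref{con:normalization}, uniformity in $t$ coming from the $t$-independence of $D_0$ and its boundary condition, and nontriviality of the limit from $\int_\D\omega_{t_*}|\chi_*|^2=1$.
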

\begin{proof}
Here, we only prove the claims for $\chi_{j}$, while $\hat{\chi}_j$ are treated  similarly. First, suppressing the subscript $j$, the regularity of $\chi$ follows by bootstrapping based on the equation \eqref{con:weighted-equations} and \eqref{con:fixed-boundary}. Then the remaining compactness property follows by Sobolev embedding. Finally, noting that all coefficients in \eqref{con:weighted-equations} and \eqref{con:fixed-boundary} converge smoothly as $t\to t_*$, so the limiting pair $(\lambda_*,\chi_*)$ indeed solves \eqref{con:weighted-equations} and \eqref{con:fixed-boundary} at $t= t_*$.
\end{proof}

\subsection{First positive eigenvalue of the disk and the associated eigenspinor} \label{sec:disk_first_eigenspinor}

In this part, we construct the explicit disk eigenspinor with a non-vanishing upper component; consequently, Lemma \ref{stm:zerofree} identifies its eigenvalue as the first positive one and implies its simplicity. This supplies the starting point for the open-and-closed continuation argument used to prove Theorem \ref{con:nonvanishing} later.

\begin{lemma}\label{con:disk}
Let $B_R$ be a disk of radius $R>0$ and let $m\geq0$. There is a
unique number $x\in(0,j_{0,1})$ satisfying
\begin{equation}\label{con:disk-root}
 \frac{J_1(x)}{J_0(x)}
       =\frac{\sqrt{x^2+(mR)^2}+mR}{x}.
\end{equation}
Then $\lambda=\sqrt{m^2+x^2/R^2}$ is the first positive eigenvalues of the IMB Dirac operator on $B_{R}$, which is simple, and the associated eigenspinor has a strictly positive upper component on the closed disk.
\end{lemma}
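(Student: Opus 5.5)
The plan is to verify directly that the explicit radial ansatz recorded in Section \ref{sec:general_spectrum} is an IMB eigenspinor. Its upper component is then positive, so Lemma \ref{stm:zerofree} identifies the eigenvalue as $\lambda_1^+$ and gives simplicity for free. I center the disk at the origin, so that $\nu=e^{\ii\theta}$ on $\partial B_R$.

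First, for $k>0$ and $\lambda=\sqrt{m^2+k^2}$ (so $a=\lambda+m$, $b=\lambda-m$, $ab=k^2$), set
\[
u=J_0(kr),\qquad v=\ii\,\frac{k}{a}\,e^{\ii\theta}J_1(kr).
\]
Smoothness at the origin follows from the series \eqref{pre:besselseries}: $e^{\ii\theta}J_1(kr)=z\cdot(\text{entire function of }r^2)$. To check \eqref{pre:system}, I use the polar formulas $\db f(r)=\tfrac12 e^{\ii\theta}f'(r)$ and $\dz\bigl(e^{\ii\theta}g(r)\bigr)=\tfrac12\bigl(g'+g/r\bigr)$. With the Bessel identities $J_0'=-J_1$ and $J_1'+J_1/x=J_0$ these give
\[
\db u=-\tfrac k2e^{\ii\theta}J_1(kr)=\tfrac{\ii a}{2}v,\qquad
\dz v=\ii\tfrac{k}{a}\cdot\tfrac k2J_0(kr)=\tfrac{\ii k^2}{2a}u=\tfrac{\ii b}{2}u.
\]
On $r=R$ the condition $v=\ii\nu u$ becomes $(k/a)J_1(kR)=J_0(kR)$. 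Writing $x=kR$ gives $a/k=\bigl(\sqrt{x^2+(mR)^2}+mR\bigr)/x$, so the boundary condition is exactly \eqref{con:disk-root}, and $\lambda=\sqrt{m^2+x^2/R^2}$.

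Second, existence and uniqueness of the root in $(0,j_{0,1})$. By Lemma \ref{pre:bessel}, the left side $h(x)=J_1/J_0$ increases strictly from $0$ to $+\infty$ on this interval. The right side is $g(x)=\sqrt{1+(mR/x)^2}+mR/x$. It is nonincreasing in $x$ (constant $1$ if $m=0$) and satisfies $g\geq1$. Hence $h-g$ is strictly increasing, negative near $0^+$ (since $h\to0$ and $g\geq1$), and tends to $+\infty$ as $x\to j_{0,1}^-$. This gives exactly one root.

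Third, positivity and identification. Since $kr\leq kR=x<j_{0,1}$ on $\overline{B_R}$ and $J_0>0$ on $[0,j_{0,1})$, the component $u$ is real and strictly positive on the closed disk. Lemma \ref{stm:zerofree} then applies and yields $\lambda=\lambda_1^+(B_R;m)$ and simplicity.

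No step is a genuine obstacle. The only points needing care are the polar Wirtinger computation for $\dz(e^{\ii\theta}J_1)$ and the monotonicity of $g$, which ensures the root is unique; both are elementary. Uniqueness of the root is not even needed for the identification, since Lemma \ref{stm:zerofree} already forces any such eigenvalue to be $\lambda_1^+$.
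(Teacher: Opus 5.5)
Your proof is correct and follows essentially the same route as the paper. You get existence and uniqueness of the root from the strict increase of $J_1/J_0$ against the nonincreasing right-hand side of \eqref{con:disk-root}. You then check that the explicit Bessel spinor satisfies \eqref{pre:system}, with the boundary condition reducing to \eqref{con:disk-root}. Finally, positivity of $J_0(kr)$ for $kr<j_{0,1}$ lets Lemma~\ref{stm:zerofree} give identification and simplicity. The only difference is that you write out the polar Wirtinger computations and the smoothness at the origin, which the paper leaves as ``easy to check.''
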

\begin{proof}
By Lemma~\ref{pre:bessel}, $J_1/J_0$ is strictly increasing from zero to infinity on $(0,j_{0,1})$. On the other hand, the right side of \eqref{con:disk-root} is positive and decreasing (strictly if $m>0$, while it is identically one if $m=0$). Its limit at the left endpoint is infinity when $m>0$ and one when $m=0$, while the right-endpoint limit is always finite. Then, the existence of a unique root $x$ follows by the intermediate value theorem.

Now, we translate the center of the disk to the origin, let $k=x/R$, and define in polar coordinates 
\begin{equation}\label{con:disk-spinor}
 u(r,\theta):=J_0(kr),\quad
 v(r,\theta):=\ii e^{\ii\theta}
               \frac{k}{\lambda+m}J_1(kr).
\end{equation}
Then, it is easy to check that $\psi=(u,v)^{\top}$ is an eigenspinor associated with $\lambda$. Since $kR<j_{0,1}$, the upper component $u$ is strictly positive on the closed disk. Then applying Lemma~\ref{stm:zerofree}
concludes the proof.
\end{proof}

\subsection{Simplicity and global non-vanishing upper component} \label{sec:simple_nonvanishing}

We now prove the main result of this section, using the ingredients from the preceding subsections.

\begin{theorem}\label{con:nonvanishing}
Let $\Omega$ be a bounded smooth, simply connected planar domain and $m\geq0$. The first positive eigenvalue of the IMB Dirac operator is simple, and the associated eigenspinor has a non-vanishing upper
component over $\overline\Omega$.
\end{theorem}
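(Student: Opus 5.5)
We run an open--closed continuation along the conformal path $\Omega_t=F_t(\D)$ of \eqref{con:path}. Let
\[
 S:=\{t\in[0,1]:\ \lambda_1^+(\Omega_t;m)\text{ admits an eigenspinor whose upper component is nonvanishing on }\overline{\Omega_t}\}.
\]
By Lemma~\ref{stm:zerofree}, for $t\in S$ the eigenvalue $\lambda_1^+(\Omega_t;m)$ is automatically simple, and its eigenspinor (unique up to scaling) has $p>0$ and $W\ge\min_{\partial\Omega_t}U>0$. Since $\Omega_0$ is a disk, Lemma~\ref{con:disk} gives $0\in S$. Because $[0,1]$ is connected, it suffices to show that $S$ is open and closed. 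The case $t=1$ then yields the statement for a translate of $\Omega$, which suffices by translation invariance.

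\textbf{Openness.} Fix $t_0\in S$. Since $\lambda_1^+(\Omega_{t_0};m)$ is simple and isolated, the norm-resolvent continuity of Lemma~\ref{con:spectral} implies that for $t$ near $t_0$ there is exactly one eigenvalue of $A_t$ near $\lambda_1^+(\Omega_{t_0};m)$; it is simple and equals $\lambda_1^+(\Omega_t;m)$ by continuity of $t\mapsto\lambda_1^+$. The associated Riesz projections converge in norm, so the normalized eigenspinors can be chosen to converge in $L^2$. Lemma~\ref{con:compactness} upgrades this to convergence of the half-spin pullbacks $\chi^{(t)}_1\to\chi^{(t_0)}_1$ in $C^0(\overline\D)$. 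Since $\chi_1^{(t_0)}=q_{t_0}(u\circ F_{t_0})$ has a positive minimum modulus on the compact set $\overline\D$ and $q_t\ne0$, the upper component on $\Omega_t$ is nonvanishing for $t$ close to $t_0$. Hence $S$ is open.

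\textbf{Closedness.} This is the main obstacle. Take $t_j\in S$ with $t_j\to t_*$, and let $\psi_j$ be the normalized first eigenspinors. By Lemma~\ref{con:compactness} and continuity of $\lambda_1^+$, a subsequence of the pullbacks converges in every $C^r(\overline\D)$ to a normalized eigenspinor pullback $\chi_*$ at $t_*$ with eigenvalue $\lambda_*=\lambda_1^+(\Omega_{t_*};m)$. The stream functions also converge. On $\D$ the pulled-back potential satisfies $2\db\widetilde p=\ii\overline{\chi_1}\chi_2$ with zero boundary value, so $\widetilde p_j\to\widetilde p_*$ uniformly. Since $p_j>0$ by Lemma~\ref{stm:zerofree}, we get $p_*\ge0$ in $\Omega_{t_*}$. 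Lemma~\ref{bdy:oscillation} then excludes common boundary zeros of $\psi_*$, so $u_*\ne0$ on $\partial\Omega_{t_*}$. Interior zeros remain to be excluded, and here I would use the weight $W$. From $W_j\ge\min_{\partial}U_j$ we get in the limit $W_*=U_*-ap_*\ge\min_{\partial}U_*>0$, the last inequality because $u_*$ has no boundary zero. If $u_*(z_1)=0$ at an interior point, then $W_*(z_1)=-ap_*(z_1)\le0$, a contradiction. Therefore $u_*$ is nonvanishing on $\overline{\Omega_{t_*}}$, so $t_*\in S$, and simplicity at $t_*$ follows again from Lemma~\ref{stm:zerofree}. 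The delicate point is the uniform bound $\min_\partial U_j\ge c>0$ needed to pass $W_j\ge\min_\partial U_j$ to the limit. This is handled exactly as above: pass to the limit in the inequality itself, so the limit inequality holds with $\min_\partial U_*$, and then invoke Lemma~\ref{bdy:oscillation} to see that this constant is positive.
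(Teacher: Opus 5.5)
Your proposal is correct and follows the paper's proof essentially step for step: the same open--closed continuation along $\Omega_t=F_t(\D)$, openness via simplicity from Lemma~\ref{stm:zerofree}, norm-resolvent perturbation and the $C^0$ upgrade of Lemma~\ref{con:compactness}, and closedness via convergence of the pulled-back stream functions to get $p_*\ge0$, Lemma~\ref{bdy:oscillation} to rule out boundary zeros, and passing $U_j-a_jp_j\ge\min_{\partial}U_j$ to the limit to rule out interior zeros. One detail is worth stating explicitly: both $\min_{\partial}U_j$ and the weight inequality should be passed to the limit on the fixed disk $\overline\D$ through the pullbacks, which your $C^r(\overline\D)$ convergence already provides.
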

\begin{proof}
Let $F_{t}(z)$ be the deformation of the domains defined by \eqref{con:path} with the right endpont being $\Omega$. Now, we define $\mathcal S$ to be the set of $t\in[0,1]$ for which the first-positive eigenspace on $\Omega_t$ contains an eigenspinor whose upper component is nonzero on $\overline{\Omega_t}$. Then we know by Lemma~\ref{con:disk} that $0\in\mathcal S$. In the sequel, we will prove that $\mathcal{S}$ is both open and closed in $[0,1]$, by which we conclude that $\mathcal{S}=[0,1]$ and completes the proof.

{\color{blue}Step 1.} We first prove the relative openness. Let $t_0\in\mathcal S$, for which Lemma~\ref{stm:zerofree} proves that the first positive eigenvalue there is simple. Then the standard perturbation theory tells that the simple eigenvalue continues to a neighborhood of $t_0$ with a $L^2$ continuous family of normalized eigenspinors $t\mapsto\psi_{t}$. Note that the continued eigenvalue must be the first positive one by shrinking the neighborhood if necessary and recalling the gap condition in Lemma~\ref{con:spectral}. The claim on the non-vanishing upper component of $\psi_{t}$ is justified by noting that Lemma~\ref{con:compactness} upgrades the $L^2$ continuity of $t\mapsto\psi_{t}$ to $C^0$.

{\color{blue}Step 2.1} To show the relative closedness, suppose that $t_j\in\mathcal S$ converges to $t_*$. Choose normalized first-positive eigenspinors $\psi_j=(u_j,v_j)^{\top}$ with non-vanishing $u_j$. By Lemmas~\ref{con:spectral} and \ref{con:compactness}, a subsequence of their half-spin pullbacks smoothly converges to a normalized first-positive eigenspinor pullback on $\Omega_{t_*}$. Denoting the corresponding (unpulled) spinor by $\psi_{*}=(u_*,v_*)^{\top}$, we now prove the density $U_{*}=|u_{*}|^2$ is non-vanishing, which then completes the proof. This is achieved again by exploiting the stream function.

{\color{blue}Step 2.2} Letting $p_*$ and $p_j$ be the stream functions associated with $\psi_{*}$ and $\psi_j$, respectively, we first prove
\begin{equation} \label{eq:positive_stream}
p_*\geq0 \quad \text{on $\Omega_{t_*}$}.
\end{equation}
In fact, if we define $P_j:=p_j\circ F_{t_j}$ (i.e., pullback of the streams), one can check
\begin{equation}\label{con:stream-pullback}
 2\db P_j=\ii\overline{\chi_{1,j}}\chi_{2,j},
 \quad P_j=0\text{ on }\partial\D.
\end{equation}
Since the right-hand side of this first order equation, as well as the boundary condition, converges smoothly, we conclude $P_j\to P_*=p_*\circ F_{t_*}$. Since $p_j>0$ by Lemma~\ref{stm:zerofree}, the proof of \eqref{eq:positive_stream} then completes.

{\color{blue}Step 2.3} By \eqref{eq:positive_stream} and Lemma~\ref{bdy:oscillation}, we know that $\psi_{*}$ does not vanish on the boundary, which leads to
\begin{equation}\label{con:limit-boundary-minimum}
 c_*:=\min_{\partial\Omega_{t_*}}|u_*|^2>0.
\end{equation}
We now exclude the possible interior zero of $u_{*}$, which then concludes the whole proof. Write $U_j=|u_j|^2$ and $a_j=\lambda_1^+(\Omega_{t_j};m)+m$. Note that, by Lemma~\ref{stm:zerofree},
\[
 U_j-a_jp_j\geq\min_{\partial\Omega_{t_j}}U_j.
\]
Passing the both sides to the limit leads to
\[
 U_*-a_*p_*\geq c_*>0
       \quad\text{on }\overline{\Omega_{t_*}}.
\]
with $a_* =\lambda_1^+(\Omega_{t_*};m)+m>0$ and $p_{*}\geq 0$ by \eqref{eq:positive_stream}. This implies that $U_{*}$ is non-vanishing and concludes the whole proof.
\end{proof}
\section{The eigenspinor map and the comparison of eigenvalues: proof of Theorem \ref{main:fk}}\label{sec:application}

In this section, we prove the strict Faber-Krahn inequality. The proof is based on Theorem \ref{con:nonvanishing}: thanks to the nonvanishing upper component $u$ of the first positive eigenspinor, we are able to construct the sphere map 
\begin{equation*}
N:=\frac{(2\Rea\eta,\,2\Ima\eta,\,1-|\eta|^2)}{1+|\eta|^2}\quad \text{ with }\eta:=\frac{\sqrt{a}v}{\sqrt{b}\overline{u}};
\end{equation*}
see \eqref{app:N} for details. In particular, using the Dirac equation, we show in Section \ref{sec:main_proof_2} that the map $N$ satisfies
\begin{equation*}
|\nabla N|^2+2J_N=4k^2
\end{equation*}
with $J_N$ being the spherical Jacobian associated with $N$ (see \eqref{app:jacobian}), and $N$ has degree one over the northern cap; see \eqref{sec:main_proof_3}. These properties enable us to apply the following geometric result:
\begin{theorem}
\label{cap:sharp}
Let $\Omega\subset\R^2$ be a bounded connected smooth domain of area $A$, and $N:\overline\Omega\to S^2$ be a smooth map that is real analytic in the interior. Suppose, for $k>0$ and $-1<c<1$, that
\begin{equation}
 |\nabla N|^2+2J_N=4k^2\quad\hbox{in }\Omega,
 \quad N_3=c\quad\hbox{on }\partial\Omega,
 \label{cap:forward}
\end{equation}
where $J_N$ is the spherical Jacobian associated with $N$. Assume that $\deg(N,\Omega,y)=1$ for every regular target value $y$ with $y_3>c$. Then
\begin{equation}
 k^2A\geq\pi x_c^2,
 \qquad \text{ with $x_c\in (0,j_{0,1})$ solves uniquely }\frac{J_1(x_c)}{J_0(x_c)}=\sqrt{\frac{1-c}{1+c}}.
 \label{cap:comparison}
\end{equation}
When the equality holds, it follows that $\Omega$ is a disk.
\end{theorem}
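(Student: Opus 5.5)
We will prove Theorem \ref{cap:sharp} by a level-set (coarea) comparison for the height function $f:=N_3$. This is in the spirit of Talenti's argument, transplanted to the sphere. The idea is that the identity $|\nabla N|^2+2J_N=4k^2$ controls the pointwise area distortion of $N$. The degree-one hypothesis then says that the superlevel sets $\{f>s\}$ cover the spherical caps $\{y_3>s\}$ exactly once in the signed sense. Define
\[
\mu(s):=|\{x\in\Omega: f(x)>s\}|, \qquad \text{for } c\le s<1 .
\]
The target is an ODE inequality for $\mu$ whose equality case is the radial disk profile $N_3=\cos\phi(r)$ with $\phi'=k$ (up to the natural normalization). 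For that profile $|\nabla N|^2=2k^2+\sin^2\phi/r^2$-type terms balance $2J_N$, and the resulting Bessel quotient produces $J_1/J_0$.

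\textbf{Step 1 (pointwise inequality).} Write $|\nabla N|^2=\sigma_1^2+\sigma_2^2$, where $\sigma_1,\sigma_2$ are the singular values of $dN$, and note $|J_N|\le\sigma_1\sigma_2$. The identity gives $(\sigma_1+\sigma_2)^2\ge 4k^2$ wherever $J_N\ge0$. More usefully, it gives $|\nabla N|^2+2J_N\le(\sigma_1+\sigma_2)^2$ everywhere. Next decompose $dN$ into the $f$-direction and the level-curve direction. Since $f$ has gradient of size at most $\sqrt{1-f^2}\,\sigma_{\max}$, and the level curve $\{f=s\}$ is mapped into the circle $\{y_3=s\}$ of length $2\pi\sqrt{1-s^2}$, we obtain the inequality
\[
|\nabla f|\le \sqrt{1-f^2}\,\bigl(2k-\text{(tangential stretch)}\bigr).
\]
This is what makes the level-set comparison close.

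\textbf{Step 2 (coarea and degree).} The degree condition and the coarea formula applied to $J_N$ give
\[
\int_{\{f>s\}}J_N\,dA=2\pi(1-s).
\]
This is the area of the cap $\{y_3>s\}$, because the boundary of $\{f>s\}$ maps into $\{y_3=s\}$ and has degree one there. Combine this with the identity integrated over $\{f>s\}$ and with $-\mu'(s)=\int_{\{f=s\}}|\nabla f|^{-1}$. Applying Cauchy--Schwarz on the level curves and the planar isoperimetric inequality $\mathrm{Per}(\{f>s\})^2\ge4\pi\mu(s)$ then yields a differential inequality comparing $\mu$ with the disk solution $\mu_0$. Here $\mu_0$ is the distribution function of $N_3$ for the radial map built from $J_0,J_1$ via the eta/stereographic construction of the disk eigenspinor. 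Since the two have the same value $\mu(1)=\mu_0(1)=0$ at the north pole, integrating the inequality down to $s=c$ gives $A=\mu(c)\ge\mu_0(c)=\pi x_c^2/k^2$. The constant $x_c$ is identified with $J_1(x_c)/J_0(x_c)=\sqrt{(1-c)/(1+c)}$ because $|\eta|=\tan(\phi/2)$ on the boundary for the radial profile.

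\textbf{Step 3 (rigidity).} If equality holds, equality must hold in the isoperimetric inequality for almost every $s$. Hence every superlevel set is a disk, and equality in Cauchy--Schwarz makes $|\nabla f|$ constant on level curves. Real analyticity handles the critical set: it is of measure zero and the levels are finitely singular. The outermost level gives $\Omega$ as a disk.

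\textbf{Main obstacle.} I expect the hardest part to be Step 1--2: making the pointwise algebra produce exactly the disk ODE. In particular, one must control the contribution of regions where $J_N<0$, or where $f>s$ but the degree counts cancel. I would first try to bound $\int_{\{f>s\}}|\nabla N|^2$ from below using the Dirichlet energy lower bound for maps onto caps ($|\nabla N|^2\ge2|J_N|$). That would reduce everything to a one-dimensional inequality in $\mu$, with critical values of $f$ treated by Sard together with analyticity.
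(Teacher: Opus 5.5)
Your outline is the paper's architecture: level sets of $f=N_3$, coarea, a degree-one winding identity on level curves, Cauchy--Schwarz, isoperimetry, comparison with a Bessel profile, and rigidity from isoperimetric equality. However, the step that carries the whole argument is not proved, and the justification you give for it does not work. The inequality $|\nabla f|\le\sqrt{1-f^2}\,(2k-\text{tangential stretch})$ is correct only if the tangential stretch is the \emph{signed} quantity $\sqrt{1-s^2}\,\partial_\tau\varphi$ on $\Gamma_s=\partial\{f>s\}$. Here $\varphi$ is the azimuth of $N$ and $\tau$ the positively oriented unit tangent. Your singular-value bound $4k^2=|\nabla N|^2+2J_N\le(\sigma_1+\sigma_2)^2$ is a \emph{lower} bound on stretching, so it cannot give an upper bound on $|\nabla f|$. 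The length of the image circle says nothing pointwise.

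The missing idea is an exact identity. For any $2\times2$ matrix, $|M|^2+2\det M=(M_{11}+M_{22})^2+(M_{12}-M_{21})^2$. Use the oriented orthonormal frames $(\tau,n)$, $n=\nabla f/|\nabla f|$, in the plane and (east, north) on the sphere. In these frames $dN$ is upper triangular, because $N_3$ is constant along $\Gamma_s$. Hence on $\Gamma_s$
\[
4k^2=\Bigl(\tfrac{|\nabla f|}{\sqrt{1-s^2}}+\sqrt{1-s^2}\,\partial_\tau\varphi\Bigr)^2+(1-s^2)(\partial_n\varphi)^2 ,
\]
so the bracket is at most $2k$; this is the paper's identity, with $J_N=|\nabla f|\,\partial_\tau\varphi$ there. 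The identity $\int_{\Gamma_s}\partial_\tau\varphi\,ds=2\pi$ holds exactly, by Stokes for $(1-y_3)\,d\varphi$ (whose differential is the area form) plus the degree hypothesis. Integrating therefore gives $\int_{\Gamma_s}|\nabla f|\,ds\le 2k\sqrt{1-s^2}\,\mathcal P(s)-2\pi(1-s^2)$, where $\mathcal P(s)$ is the length of $\Gamma_s$. Regions with $J_N<0$ or cancelling preimages never need to be controlled, so your ``main obstacle'' disappears. Likewise, a lower bound for $\int_{\{f>s\}}|\nabla N|^2$ is not what is needed.

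Second, inserting isoperimetry needs an ingredient you do not supply. Cauchy--Schwarz gives $-\mu'(s)\ge F_s(\mathcal P(s))$ with $F_s(P)=P^2/\bigl(2k\sqrt{1-s^2}\,P-2\pi(1-s^2)\bigr)$. But $F_s$ is increasing only for $P>2\pi\sqrt{1-s^2}/k$, so replacing $\mathcal P$ by $2\sqrt{\pi\mu}$ requires $k^2\mu(s)\ge\pi(1-s^2)$. This is the real role of $|\nabla N|^2\ge 2|J_N|$. Combined with the identity, it gives $J_N\le k^2$ pointwise. Hence $k^2\mu(s)\ge\int_{\{f>s\}}J_N=2\pi(1-s)>\pi(1-s^2)$, which also keeps every denominator positive.

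Two smaller points. First, the comparison from the degenerate point $s=1$ is not automatic in the variable $\mu$, where the right-hand side increases with $\mu$. The paper sets $\theta=\arccos s$ and $r=k\sqrt{\mu/\pi}$, obtaining $r'\ge r/(2r-\sin\theta)$ with a right side \emph{decreasing} in $r$. Then $\max\{r_B-r,0\}$ is non-increasing and tends to $0$ as $\theta\to0^+$. Here $r_B$ is the inverse of $x\mapsto 2\arctan(J_1(x)/J_0(x))$, via the Riccati equation for $J_1/J_0$. Note that the radial profile obeys $\phi'(\rho)+\sin\phi(\rho)/\rho=2k$ in the planar radius $\rho$, not $\phi'=k$.

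Second, in general only $A\ge\mu(c)$ holds, so for rigidity equality first forces $\mu(c)=A$ and $r(\theta_0^-)=r_B(\theta_0)$. You must then propagate this to $r\equiv r_B$ on $(0,\theta_0)$ before concluding isoperimetric equality at almost every level. For instance, $d=r-r_B\ge0$ satisfies $d'\ge-qd$ with $q$ bounded away from $\theta=0$. The superlevel sets are then nested disks whose union is a disk of area $A$ contained in $\Omega$, which forces $\Omega$ to be that disk.
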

Both an intuitive explanation of Theorem \ref{cap:sharp} and the detailed proof are presented in Section \ref{sec:cap}. In particular, the inequality \eqref{cap:comparison} implies \(\lambda\ge\lambda_1^+(B_{|\Omega|};m)\), which leads to the conclusion of Theorem \ref{main:fk}. The detailed argument is presented in the sequel.

\subsection{Sphere map constructed by the eigenspinor} \label{sec:main_proof_1}
In this part, we construct the sphere map $N$ and list its basic properties. Fix the mass $m\geq0$ and let $\lambda=\lambda_1^+(\Omega;m)$. Thanks to Theorem~\ref{con:nonvanishing}, we can select a normalized eigenspinor $(u,v)^{\top}$ whose upper component is nonzero on the $\overline{\Omega}$. Recall the notation $a=\lambda+m$, $b=\lambda-m$, $k=\sqrt{ab}$, and $U=|u|^2$, $V=|v|^2$. Then we define the rescaled functions
\begin{equation}\label{app:rescale}
 f:=\sqrt b\,u,\quad g:=\sqrt a\,v,
 \quad \varrho:=|f|^2+|g|^2=bU+aV>0.
\end{equation}
Correspondingly, \eqref{pre:system} becomes
\begin{equation}\label{app:massless}
 \db f=\frac{\ii k}{2}g,\qquad
 \dz g=\frac{\ii k}{2}f.
\end{equation}
With these notions, we define
\begin{equation} \label{app:N}
N:=\frac{(2\Rea\eta,\,2\Ima\eta,\,1-|\eta|^2)}{1+|\eta|^2}
\end{equation}
with
\begin{equation} \label{app:eta}
\eta:=\frac{g}{\bar f},
\end{equation}
which is well defined since $f$ is non-vanishing. The map $N$ takes values on the unit sphere $S^2\subset\R^3$ and omits its south pole. Since $f$ and $g$ solve constant-coefficient Helmholtz equations as seen by \eqref{app:massless}, $N$ is real analytic in the interior of $\Omega$ and smooth throughout the closure.

\subsection{The energy and Jacobian identity} \label{sec:main_proof_2}
This subsection is dedicated to proving the first identity of \eqref{cap:forward}. To this end, we introduce the stereographic parametrization
\begin{equation}\label{app:stereo}
 \mathcal S(z):=
 \frac{(2\Rea z,2\Ima z,1-|z|^2)}{1+|z|^2}
 \quad(z\in\C).
\end{equation}
It is clear that $\mathcal S$ is orientation-preserving and maps  $z=0$ to the north pole. Writing $z=X+\ii Y$, direct differentiation gives the spherical metric and area form in this chart:
\begin{equation}\label{app:metric}
 ds_{S^2}^2=\frac{4(dX^2+dY^2)}{(1+X^2+Y^2)^2},\quad
 \omega_{S^2}=\frac{4\,dX\wedge dY}{(1+X^2+Y^2)^2}.
\end{equation}
Here $dX\wedge dY$ is the oriented area two-form. Using this parametrization, it is now direct to calculate the square norm of gradient $|\nabla N|^2=|N_x|^2+|N_y|^2$ and the spherical Jacobian $J_N:=N\cdot(N_x\times N_y)$: by \eqref{app:metric} and $|\eta_x|^2+|\eta_y|^2=2(|\eta_z|^2+|\eta_{\bar z}|^2)$, it holds that
\begin{align}
 |\nabla N|^2&=
 \frac{8(|\eta_z|^2+|\eta_{\bar z}|^2)}{(1+|\eta|^2)^2},
                                                               \label{app:energy}\\
 J_N&=\frac{4(|\eta_z|^2-|\eta_{\bar z}|^2)}{(1+|\eta|^2)^2}.
                                                               \label{app:jacobian}
\end{align}
Moreover, differentiating \eqref{app:eta} and applying \eqref{app:massless} give
\begin{align}
 \eta_z
 &=\frac{g_z\bar f-g(\bar f)_z}{\bar f^2}
 =\frac{(\ii kf/2)\bar f-g(-\ii k\bar g/2)}{\bar f^2}
 =\frac{\ii k\varrho}{2\bar f^2}.                 \label{app:quotientderivative}
\end{align}
Since $1+|\eta|^2=\varrho/|f|^2$, this implies that
\begin{equation}\label{app:forward}
 \frac{|\eta_z|^2}{(1+|\eta|^2)^2}=\frac{k^2}{4},
 \quad |\nabla N|^2+2J_N=4k^2,
\end{equation}
which completes the proof.

\subsection{Boundary latitude and degree one} \label{sec:main_proof_3}
In this part, we prove the second identity of \eqref{cap:forward} and justify $N$ has degree one, which are assumed in Theorem \ref{cap:sharp}. First, the IMB gives $|v|=|u|$ on $\partial\Omega$, and therefore
\begin{equation}\label{app:latitude}
 N_3=\frac{b-a}{b+a}=-\frac m\lambda=:c\in(-1,0],
 \quad
 \eta=\sqrt{\frac ab}\,\ii\nu\frac{u}{\bar u}.
\end{equation}
Next, we calculate explicitly the degree of $N$. Let $\zeta$ be a regular value of $\eta$ with $|\zeta|<r_c:=\sqrt{a/b}=\sqrt{(1-c)/(1+c)}$. First, note that the degree of $N$ equals the winding number $\operatorname{wind}(\eta|_{\partial\Omega}-\zeta,0)$, which is checked using the Stokes theorem. On the other hand, since the homotopy $\eta|_{\partial\Omega}-s\zeta$ with $0\leq s\leq1$ is non-vanishing, due to the choice $|\zeta|<r_c$, it suffices to prove that
\begin{equation}\label{app:boundarywind}
 \operatorname{wind}(\eta|_{\partial\Omega},0)=1
\end{equation}
to conclude that $N$ has degree one. Indeed, since $u$ is non-vanishing, it has a global smooth real phase $\theta$, with $u=|u|e^{\ii\theta}$, by which \eqref{app:boundarywind} follows by direct calculation.

\subsection{Proof of \eqref{main:ineq}}
Now we have verified all hypotheses of Theorem~\ref{cap:sharp}. It gives, with $R=\sqrt{A/\pi}$,
\begin{equation}\label{app:comparison}
 kR\geq x_{-m/\lambda}, \quad \text{ with $x_{-m/\lambda}\in (0,j_{0,1})$ solves uniquely }
 \frac{J_1(x_{-m/\lambda})}{J_0(x_{-m/\lambda})}
       =\sqrt{\frac{\lambda+m}{\lambda-m}}.
\end{equation}
For fixed $m\geq0$ and $R>0$, define on $\lambda>m$
\[
 L(\lambda):=R\sqrt{\lambda^2-m^2},\qquad
 X(\lambda):=x_{-m/\lambda}.
\]
We claim that $L-X$ is strictly increasing with its unique zero being $\lambda_1^+(B_A;m)$, i.e., the first positive eigenvalue on the disk; then \eqref{app:comparison} implies $\lambda\geq\lambda_1^+(B_A;m)$. Indeed, since $L$ is clearly increasing, it suffices to show that $X$ is non-increasing to conclude the monotonicity: this is actually justified using \eqref{app:comparison} and similar arguments as in Lemma \ref{con:disk}. The existence of a unique zero follows by noting that $L-X$ is negative near $m$ and tends to infinity as $\lambda\to\infty$, which is also checked elementarily. Finally, the fact that the unique zero of $L-X$ is indeed equal to $\lambda_1^+(B_A;m)$ is justified by using the relations
\[
 J_0(kR)=\frac{k}{\lambda+m}J_1(kR),\quad k=\sqrt{\lambda^2-m^2},
\]
and $(\lambda+m)/k=\sqrt{(\lambda+m)/(\lambda-m)}$.

Suppose that $\lambda=\lambda_1^+(B_A;m)$. Then $L(\lambda)=X(\lambda)$ and $k^2A=\pi x_c^2$. The equality statement in Theorem~\ref{cap:sharp} therefore forces $\Omega$ to be a disk. This completes the proof of Theorem~\ref{main:fk}.

\section{Proof of Theorem \ref{cap:sharp}}
\label{sec:cap}

Before showing the proof, we give an illustration of Theorem \ref{cap:sharp}. First, since we have fixed degree one of the map $N$, it means $N$ has a net turn around each latitude. On the other hand, the differential identity
\begin{equation*}
|\nabla N|^2+2J_N=4k^2
\end{equation*}
guarantees that any excess perimeter of the preimage of $N$ gains more area between neighboring levels. As a consequence, the circular level sets of disk cover the northern cap with the least planar area. In fact, the detailed proof adapts the classical level-set comparison method of Talenti (cf. \cite[Section 3-4]{Talenti}) and Chiti (cf. \cite{Chiti}) to sphere-valued maps, using the signed degree, provided by our Dirac model, to close the flux estimate and obtain the sharp radial Bessel comparison.

\subsection{Preliminaries}
We first present some preliminaries for the proof, including the definition of the spherical Jacobian and its relation to the degree. The results in this subsection are standard.

In the sequel, we orient the plane by $dx\wedge dy$, and the unit sphere $S^2=\{y\in\R^3:|y|=1\}$ by its outward normal. For a smooth map $N:\overline\Omega\to S^2$, its signed Jacobian is
\begin{equation}
 J_N=N\cdot(N_x\times N_y),\qquad
 N^*\omega_{S^2}=J_N\,dx\wedge dy.
 \label{cap:jacobian}
\end{equation}
At a regular value $y\notin N(\partial\Omega)$, i.e., $dN$ has rank two at every $N$-preimage of $y$, we know that the Jacobian is non-vanishing. Then the signed degree is defined by
\begin{equation}
 \deg(N,\Omega,y):=
       \sum_{x\in\Omega:\,N(x)=y}\operatorname{sign}J_N(x),
 \label{cap:degree}
\end{equation}
which extends to a locally constant function of $y$ on $S^2\setminus N(\partial\Omega)$. It can also be written in integral form: if $U\subset S^2\setminus N(\partial\Omega)$ is open with a boundary of spherical area zero, then
\begin{equation}
 \int_{N^{-1}(U)}J_N\,dx
       =\int_U\deg(N,\Omega,y)\,dA_{S^2}(y).
 \label{cap:areaformula}
\end{equation}
See \cite{Federer} for a detailed discussion.

In the sequel, away from the poles, we parameterize a sphere point as
\begin{equation} \label{eq:varphi_def}
y=(\sqrt{1-s^2}\cos\varphi,
       \sqrt{1-s^2}\sin\varphi,s),\quad -1<s<1.
\end{equation}
The azimuth $\varphi$ is defined modulo $2\pi$. Direct differentiation
gives the metric and oriented area form
\begin{equation}
 d\ell_{S^2}^{\,2}=\frac{ds^2}{1-s^2}+(1-s^2)d\varphi^2,
 \qquad \omega_{S^2}=d\varphi\wedge ds.
 \label{cap:metric}
\end{equation}
We denote $C_t:=\{y\in S^2:y_3>t\}$ by the northern cap, which has area $2\pi(1-t)$ for $-1<t<1$.

\subsection{Level sets, coarea, and the signed winding} \label{sec:level sets}
In this part, as the first step of proof, we introduce the latitude superlevel sets, the associated area variation, and convert the degree one condition into an exact total winding of \(2\pi\) along each regular level.

Let $w:=N_3$. The degree one assumption ensures that every regular value $y$ (of $N$) in the cap $C_c$ indeed has a preimage. Taking a sequence of such regular values that tend to the north pole, the compactness implies that $\max w=1$, where the maximum is attained in the interior because $w=c<1$ on the boundary. Thus we know $w$ is non-constant. Then the level sets are defined as follows: for $c<t<1$, let
\begin{equation}
 E_t:=\{x\in\Omega:w(x)>t\},\quad
 \mathcal V(t):=|E_t|,
 \label{cap:volume}
\end{equation}
and at a regular value $t$ of $w$, we define
\begin{equation*}
\Gamma_t:=\partial E_t.
\end{equation*}
Along a level curve, the following quantities are defined to measure the steepness of $w$ and the rate at which the enclosed area changes
\begin{equation}  \label{cap:levelquantities}
X(t):=\int_{\Gamma_t}g\,ds,\quad
 Y(t):=\int_{\Gamma_t}\frac{1}{g}\,ds,\quad
 \text{ with }  g:=|\nabla w|\big|_{\Gamma_t} .
\end{equation}
Here and below $ds$ denotes the planar arclength. The level curve $\Gamma_{t}$ is compactly contained in $\Omega$ and is a finite union of smooth closed curves, on which $g>0$.

Now, we consider the rate of variation of the superlevel-set area, i.e., \(\mathcal V^{\prime}(t)\). Note that the critical set $\{\nabla w=0\}$ has zero measure since $w$ is real analytic and is non-constant as justified before. Hence, the coarea formula shows that, for $c<\alpha<\beta<1$,
\[
 \mathcal V(\alpha)-\mathcal V(\beta)
   =\int_{\{\alpha<w<\beta\}}1\,dx
   =\int_\alpha^\beta
       \left(\int_{\{w=t\}}\frac{1}{|\nabla w|}\,ds\right)dt.
\]
Consequently, we know $\mathcal V$ is locally absolutely continuous on $(c,1)$ and
\begin{equation}
 -\mathcal V^{\prime}(t)=Y(t)\quad\hbox{for almost every }t\in(c,1).
 \label{cap:coarea}
\end{equation}

Finally, we show that, for a regular level, the degree one condition converts to the following winding identities:
\begin{equation}
 \int_{\Gamma_t}\varphi_s\,ds=2\pi,
 \quad \int_{E_t}J_N\,dx=2\pi(1-t),
 \label{cap:winding}
\end{equation}
where $\varphi$ is introduced in \eqref{eq:varphi_def}, the unit positive tangent $\tau_t:=\J\nu_t$ with $\nu_t=-\frac{\nabla w}{g}$ being the outward normal, and the subscript $s$ denotes differentiation along $\tau_t$. To show this, note that the sphere coordinates in
\eqref{cap:metric} now give
\begin{equation}
 J_N=g\varphi_s\quad\hbox{on }\Gamma_t.
 \label{cap:leveljacobian}
\end{equation}
On the other hand, the following one-form is smooth on the sphere minus the south pole
\[
 \alpha:=(1-s)d\varphi
        =\frac{y_1\,dy_2-y_2\,dy_1}{1+y_3}
\]
and $d\alpha=\omega_{S^2}$ (see \eqref{cap:metric}). Thus, recalling that $N(E_t)\subset C_t$ and applying the Stokes' theorem, \eqref{cap:areaformula} yields
\[
 (1-t)\int_{\Gamma_t}\varphi_s\,ds
 =\int_{\Gamma_t}N^*\alpha
 =\int_{E_t}J_N\,dx=2\pi(1-t),
\]
where \eqref{cap:areaformula} and the degree one condition are applied to obtain the last equality. This concludes the proof of \eqref{cap:winding}.

\subsection{A perimeter inequality} \label{sec:perimeter inequality}

Based on the results obtained in Section \ref{sec:level sets}, the target of this part is to prove the following inequality, which will justify the intuition stated at the beginning of this section on the excess-area gaining
\begin{equation}
 -\mathcal V^{\prime}(t)\geq
 \frac{2\mathcal V(t)}
 {2k\sqrt{(1-t^2)\mathcal V(t)/\pi}-(1-t^2)}
 \quad\hbox{for almost every }c<t<1.
 \label{cap:volumeode}
\end{equation}
In fact, using \eqref{cap:metric} and \eqref{cap:leveljacobian}, the exact identity $|\nabla N|^2+2J_N=4k^2$ in \eqref{cap:forward} transfers to the level set $\Gamma_t$ and becomes
\begin{equation}
 \left(\frac{g}{\sqrt{1-t^2}}+\sqrt{1-t^2}\,\varphi_s\right)^2
                 +(1-t^2)\varphi_{\nu_t}^{\,2}=4k^2.
 \label{cap:square}
\end{equation}
Hence, the first bracket is bounded by $2k$. Multiplicated by $\sqrt{1-t^2}$ and integrating over $\Gamma_{t}$, the definition \eqref{cap:levelquantities} and the exact winding identity \eqref{cap:winding} then lead to
\begin{equation}
 X(t)+2\pi(1-t^2)\leq2k\sqrt{1-t^2}\,\mathcal P(t).
 \label{cap:flux}
\end{equation}
with $\mathcal P(t)$ being the arclength of $\Gamma_{t}$. By Cauchy-Schwarz, it is clear that $\mathcal P(t)^2\leq X(t)Y(t)$, and hence
\begin{equation}
 Y(t)\geq\frac{\mathcal P(t)^2}{X(t)}
 \geq\frac{\mathcal P(t)^2}
                 {2k\sqrt{1-t^2}\,\mathcal P(t)-2\pi(1-t^2)}.
 \label{cap:perimeterratio}
\end{equation}
By \eqref{cap:coarea}, it now suffices to replace $\mathcal P(t)$ with $2\sqrt{\pi\mathcal{V}}$ in the right side of \eqref{cap:perimeterratio} to conclude the proof of \eqref{cap:volumeode}. This is indeed justified by noting that
\begin{itemize}
    \item the following function is strictly increasing for $P>2\pi\sqrt{1-t^2}/k$
\begin{equation} \label{eq:Ft_def}
F_t(P):=\frac{P^2}{2k\sqrt{1-t^2}\,P-2\pi(1-t^2)}.
\end{equation}
    \item By the elementary inequality $|\nabla N|^2\geq2|J_N|$, together with \eqref{cap:forward}, we have $J_N\leq k^2$; hence \eqref{cap:winding} tells
    \begin{equation}
 k^2\mathcal V(t)\geq2\pi(1-t).
 \label{cap:signedareabound}
\end{equation}
    \item Finally, the planar isoperimetry applied to the complete smooth set $E_t$ gives
    \[
 \mathcal P(t)\geq P_0(t):=2\sqrt{\pi\mathcal V(t)}
 \geq\frac{2\pi}{k}\sqrt{2(1-t)}
 >\frac{2\pi}{k}\sqrt{1-t^2}.
\]
\end{itemize}
Hence, the proof of \eqref{cap:volumeode} is complete.

\subsection{Proof of \eqref{cap:comparison}}
In this step, we convert the differential inequality \eqref{cap:volumeode} to the Bessel-form as stated in Theorem \ref{cap:sharp}. This will prove the inequality part of Theorem \ref{cap:sharp}. In the sequel, we write
\begin{equation} \label{eq:r_def}
\theta:=\arccos t,\quad \theta_0:=\arccos c\in(0,\pi),\quad
 r(\theta):=k\sqrt{\mathcal V(\cos\theta)/\pi}.
\end{equation}
Then, by \eqref{cap:volumeode}, \eqref{cap:signedareabound} and the chain rule, we know
\begin{equation}
 r^{\prime}(\theta)\geq \mathcal F_\theta(r(\theta)),\quad
 \mathcal F_\theta(x)=\frac{x}{2x-\sin\theta},\quad
 r(\theta)\geq2\sin(\theta/2).
 \label{cap:radiusode}
\end{equation}
In particular, the denominators at $x=r(\theta)$ are strictly positive.

On the other hand, recall from Lemma~\ref{pre:bessel} that $h(x)=J_1(x)/J_0(x)$ is strictly increasing from zero to infinity on $(0,j_{0,1})$ and satisfies $h^{\prime}=1+h^2-h/x$. Hence, if we write
\[
 \theta_B(x):=2\arctan h(x),\quad 0<x<j_{0,1},
\]
we know $\theta_B(x)$ increases from zero to $\pi$ with the derivative
\[
 \theta_B^{\prime}(x)=2-\frac{\sin\theta_B(x)}{x}>0.
\]
Denoting its inverse by $r_B:(0,\pi)\to(0,j_{0,1})$, it is direct to calculate that
\begin{equation}
 r_B^{\prime}(\theta)=\frac{r_B(\theta)}{2r_B(\theta)-\sin\theta}
             =\mathcal F_\theta(r_B(\theta)),
 \quad \lim_{\theta\to 0^{+}}r_B(\theta)=0,
 \label{cap:diskode}
\end{equation}
with the denominator being positive by $\theta_B^{\prime}(x)>0$.

We are now ready to prove \eqref{cap:comparison}. On any subset $[\varepsilon,T]\subset(0,\theta_0)$, the function $Z(\theta):=\max\{r_B-r,0\}$ is absolutely continuous. Moreover, when $Z>0$, \eqref{cap:radiusode} and \eqref{cap:diskode} give $Z^{\prime}\leq\mathcal F_\theta(r_B)-\mathcal F_\theta(r)\leq0$ (as it is easy to check that $F_\theta$ is decreasing). When $Z=0$, one can verify by the chain rule that $Z^{\prime}=0$ holds almost everywhere. In conclusion,
\[
 Z(T)\leq Z(\varepsilon)\leq r_B(\varepsilon).
\]
Letting $\varepsilon\to 0^{+}$ leads to
\begin{equation}
 r(\theta)\geq r_B(\theta),\quad 0<\theta<\theta_0.
 \label{cap:radiuscomparison}
\end{equation}
Using \eqref{cap:volume}, \eqref{eq:r_def} and noting that $E_t$ increases to $\{w>c\}$ as $t\to c^{+}$, we conclude from \eqref{cap:radiuscomparison} that
\[
 k^2A\geq k^2|\{w>c\}|
       \geq\pi r_B(\theta_0)^2.
\]
This proves \eqref{cap:comparison} by recalling that $x_c=r_B(\theta_0)$ obeys $h(x_c)=\tan(\theta_0/2)=\sqrt{(1-c)/(1+c)}$.

\subsection{Equality and sharpness}

Finally, we prove the equality part of Theorem \ref{cap:sharp}. Suppose that the equality in \eqref{cap:comparison} holds. Then we know $|\{w>c\}|=A$ and the non-negative difference $d:=r-r_B$ vanishes at $\theta_0$. On every interval bounded away from zero, the subtraction of \eqref{cap:diskode} from \eqref{cap:radiusode} gives
\begin{equation}
 d^{\prime}\geq-q(\theta)d,\quad \text{ with }
 q(\theta):=\frac{\sin\theta}
 {(2r-\sin\theta)(2r_B-\sin\theta)}.
 \label{cap:equalityode}
\end{equation}
One can check that, for every $\varepsilon>0$, $q$ is bounded on
$[\varepsilon,\theta_0)$. Hence, the first-order equation \eqref{cap:equalityode} indicates that a positive value of $d$ cannot decrease to zero at the endpoint. Therefore, $r=r_B$ throughout $(0,\theta_0)$. Then it follows that \eqref{cap:volumeode} is equality almost everywhere. Consequently, at almost every regular level, the entire chain in the following becomes equality
\[
 Y\geq\frac{\mathcal P^2}{X}
       \geq F_t(\mathcal P)\geq F_t(P_0)
\]
with $F_{t}$ introduced in \eqref{eq:Ft_def}. Moreover, the strict increase of $F_t$ forces $\mathcal P=P_0$, or equivalently $\mathcal P(t)^2=4\pi\mathcal V(t)$. This, together with planar isoperimetry, implies $E_t$ is a single disk. Now we use this fact to show that $\Omega$ is indeed a disk by an approximating argument. Choose a decreasing sequence of such regular levels $t_j\to c^{+}$ and write $E_{t_j}=B(z_j,R_j)$, a family of nested disks, whose radius increases and tends to $R:=\sqrt{A/\pi}$. For $j>i$, the nesting relation gives $|z_j-z_i|\leq R_j-R_i$; hence $z_i$ converges to some $z_*$. This, together with the estimate $|z_*-z_i|\leq R-R_i$, implies that $E_{t_i}$ is contained in $B(z_*,R)$ for sufficiently large $i$. Conversely, every point of $B(z_*,R)$ belongs to $E_{t_i}$ for sufficiently large $i$ by the convergence of centers and radius. In conclusion, the limiting disk $B(z_*,R)$ is contained in $\Omega$ but has the same area as $\Omega$, leading to the desired conclusion $\Omega=B(z_*,R)$.


\bigskip
\textbf{Conflict of Interest Statement}: On behalf of all authors, the
corresponding author states that there is no conflict of interest.

\bigskip
\textbf{Data Availability Statement}: On behalf of all authors, the
corresponding author confirms that the current paper does not have
associated data.

\bigskip
\textbf{AI Usage}: The authors first realized that the first positive eigenspinor should possess a non-vanishing upper component (after checking a series of numerical tests produced by ChatGPT 6 Astra), which initiates the attempts for the proof and the idea that a deformation argument might be available. After transferring these observations to ChatGPT 6 Astra, it provides the detailed proof, including the verification of non-vanishing upper components (Theorem \ref{con:nonvanishing}) and the geometric comparison bridge (Theorem \ref{cap:sharp}). The authors then substantially simplified the arguments and rewrote the manuscript.

\nolinenumbers
\phantomsection

\end{document}